\numberwithin{equation}{section}
\newtheorem{theorem}{Theorem}[section]
\newtheorem{lemma}[theorem]{Lemma}
\theoremstyle{definition}
\theoremstyle{remark}
\newtheorem{remark}[theorem]{Remark}
\numberwithin{equation}{section}
\newcounter{saveeqn}
\newcommand{\ba}{\begin{array}}
	\newcommand{\ea}{\end{array}}
\newcommand{\bea}{\begin{eqnarray*}}
	\newcommand{\eea}{\end{eqnarray*}}
\newcommand{\bean}{\begin{eqnarray}}
	\newcommand{\eean}{\end{eqnarray}}
\def\dist{\mathrm{dist}}
\newcommand{\AuthorInfo}[4]{%
	\textsc{#1}%
	\IfStrEq{#4}{true}{$^{*}$}{}\\
	#2\\
	\textit{E-mail:} \texttt{#3}%
	\IfStrEq{#4}{true}{\\\textit{$^{*}$Corresponding author.}}{}
	\par\vspace{0.5em}
}
\title[Construction of Customized Solutions for Schr\"odinger Equations]
{Construction of Solutions with Extraordinary Gradient Amplification and Localization for Schr\"odinger Equations }
\date{} 
\begin{document}
	
	\maketitle
	
	\begin{center}
		\bigskip
		\footnotesize

		\AuthorInfo{Huaian Diao}{School of Mathematics and Key Laboratory of Symbolic Computation and Knowledge Engineering of Ministry of Education, Jilin University, Changchun 130012, China}{diao@jlu.edu.cn, hadiao@gmail.com}{false}
		
		\AuthorInfo{Xieling Fan}{School of Mathematics, Central South University, Changsha, China and Department of Mathematics, City University of Hong Kong, Kowloon, Hong Kong SAR, China}{fanxieling@outlook.com, xielinfan2-c@my.cityu.edu.hk}{false}
		
		\AuthorInfo{Hongyu Liu}{Department of Mathematics, City University of Hong Kong, Kowloon, Hong Kong SAR, China}{hongyu.liuip@gmail.com, hongyliu@cityu.edu.hk}{false}
	\end{center}

	\normalsize

	\begin{abstract}
This paper constructs solutions to linear and nonlinear Schr\"odinger-type equations in two and three spatial dimensions that exhibit prescribed, extraordinary gradient amplification and localization. For any finite time interval $[0,T]$, any prescribed collection of $n\in\mathbb{N}$ distinct points on $\partial D$, where $D$ is the compact support of the anisotropic coefficients, lower-order terms, or nonlinearities, and any amplitude threshold $\mathcal{M}>0$, we show that one can design smooth initial and/or boundary data such that the spatial gradients of the resulting solutions exceed $\mathcal{M}$ in neighborhoods of these points outside $D$ for almost every $t\in[0,T]$. Moreover, the ratio between the local $C^{1,\frac12}$-norm of the solution near each prescribed point outside $D$ and the $C^{1,\frac12}$-norm inside $D$ is bounded from below by $\mathcal{M}/2$ for almost every $t\in[0,T]$.
	
	We further prove that the spatial measure of the regions where the gradient magnitude exceeds $\mathcal{M}$ tends to zero as $\mathcal{M}\to\infty$, demonstrating that the amplification phenomenon is highly localized.  This effect arises from the structure of the
	Schr\"odinger-type equation combined with carefully designed input profiles. From a physical perspective, the results provide a deterministic analogue of localization phenomena observed in quantum scattering and Anderson localization. In addition, the observed trade-off between extreme spatial localization and large gradient amplification is fully consistent with the spirit of the Heisenberg uncertainty principle: while the latter is traditionally formulated in a global $L^2$ space--frequency framework, our results offer a complementary deterministic manifestation at the level of localized spatial gradients in Schr\"odinger dynamics.

		\medskip

		\noindent{\bf Keywords:}~~Schr\"odinger equation; localized gradient amplification; localized surface resonance; customized Anderson localization; quantum wave scattering; Heisenberg uncertainty principle 
		
		\noindent{\bf 2020 Mathematics Subject Classification:}~~35Q55,35B65,35P25,35B40,47A75,35B30,\\ 81Q10
		
	\end{abstract}

    \section{Introduction}
	
\subsection{Background and discussion}

The Schr\"odinger equation is one of the fundamental dispersive models in mathematical physics. In its time-dependent form, it describes the evolution of complex wave functions and provides the basic framework for the analysis of non-relativistic quantum dynamics. From the viewpoint of partial differential equations, its dispersive structure gives rise to a rich analytic theory, including decay estimates, Strichartz estimates, scattering theory, and delicate regularity phenomena; see, for instance, \cite{Cazenave2003Semilinear, Tao2006NDE}. In many physically relevant settings, one must also incorporate nonlinear self-interaction effects, which leads to nonlinear Schr\"odinger-type equations. A canonical model is the nonlinear Schr\"odinger equation
\begin{equation}\label{eq:NLS}
	i\,\partial_{t} u + \Delta u + \kappa\,|u|^{p-1} u = 0,
	\qquad (x,t) \in \mathbb{R}^{d} \times \mathbb{R},
\end{equation}
where $\kappa \in \mathbb{R}$ and $p > 1$. The focusing and defocusing regimes exhibit markedly different dynamics, and the equation arises naturally in nonlinear optics, Bose--Einstein condensation, and water wave theory; see \cite{SulemSulemNLS, Cazenave2003Semilinear, Tao2006NDE}.

The mathematical theory of Schr\"odinger equations has developed along several major directions, among which well-posedness, scattering, and singularity formation are particularly prominent. In the focusing regime, finite-time blow-up is a central phenomenon. Classical results show that sufficiently localized smooth initial data may generate singular behavior in supercritical settings \cite{Glassey1977Blowup}, while in critical regimes one may even prescribe blow-up at finitely many spatial points \cite{Merle1990kBlowup, Fan2017LogLogMultiPoint}. On the other hand, localization phenomena also play an important role in the spectral and dynamical theory of Schr\"odinger operators. A representative example is Anderson localization \cite{Anderson1958}, where random or quasi-periodic structures may force eigenfunctions to concentrate strongly near certain spatial regions; see, e.g., \cite{Cycon1987, ReedSimon3, Taylor1972, Yafaev1992}. These two directions, singularity formation and localization, reflect in distinct manners the capacity of Schr\"odinger dynamics to generate highly nontrivial spatial concentration.

The purpose of the present work is to establish a distinct concentration mechanism for a broad class of linear and nonlinear Schr\"odinger-type equations. More precisely, we show that, over any prescribed finite time interval $[0,T]$, one can construct smooth initial and/or boundary data such that the corresponding solution remains locally $H^{3}$-regular, yet exhibits extraordinarily large spatial gradients in arbitrarily small neighborhoods of any prescribed finite collection of boundary points. This phenomenon is fundamentally different from classical blow-up: no loss of regularity occurs, and the large-gradient behavior is both controlled and customizable. It is also different in nature from Anderson localization, which is induced by randomness or spectral disorder. Here the localization is deterministic and constructive, produced through carefully designed inputs and realized within globally regular Schr\"odinger evolutions.

A further feature of our construction is that the region of amplification becomes quantitatively small as the prescribed gradient threshold increases. More precisely, if $\mathcal M$ denotes the target size of the gradient, then the measure of the set on which the gradient exceeds $\mathcal M$ decays like $O(\mathcal M^{-d})$; see Theorems~\ref{thm:main} and~\ref{thm:main2}. Thus, increasingly large gradients are achieved on increasingly small spatial sets. In this sense, our results reveal a quantitative concentration mechanism that is compatible with the dispersive character of Schr\"odinger dynamics

	\subsection{Mathematical setup and summary of main results}

We focus on the mathematical models for both linear and nonlinear Schr\"{o}dinger equations in two and three spatial dimensions in this paper, where the mechanism for these mathematical models will be elaborated in more detail later. Consider that $\Omega$ is a bounded Lipschitz domain in $\mathbb{R}^d$, $d=2, 3$, and $0 < T < \infty$. Furthermore, assume that $D$ is a bounded Lipschitz domain such that $D \Subset \Omega$, and the region $\Omega \setminus \overline{D}$ is connected. Throughout this paper, $\overline{\cdot}$ denotes the complex conjugate and $\top$ denotes the transpose. We define the partial differentiation operator $\mathcal{L}_1$ as follows:
\begin{equation}\notag
    \mathcal{L}_1 = 
\nabla \cdot (\mathscr{A}_1 
\nabla ),
\end{equation}
where the complex-valued coefficient matrix $\mathscr{A}_1 = (a_1^{ij}(\mathbf{x}))_{i,j=1}^d$ satisfies the Hermitian property $\overline{\mathscr{A}}_1^{\top}(\mathbf{x}) = \mathscr{A}_1(\mathbf{x}), ~\forall \mathbf{x} \in \Omega$, and belongs to the regularity class $\mathscr{A}_1 \in W^{2,\infty}(\Omega) \cap C^{1,1}(\overline{\Omega})$. The operator is assumed to be strongly elliptic in the sense that there exists a constant $\theta > 0$ such that:
\begin{equation}\notag
    \overline{\xi}^{\top} \mathscr{A}_1(\mathbf{x}) \xi \geq \theta |\xi|^{2}
\end{equation}
for a.e. $\mathbf{x} \in \Omega$ and for all $\xi \in \mathbb{C}^d$. Furthermore, assume that $\mathscr{A}_1- \mathbf{I}$ has compact support in $D$, namely,
		\begin{equation*}\label{compact support}
			\operatorname{supp}(\mathscr{A}_1 - \mathbf{I}) \subset D,
		\end{equation*}
        where $\mathbf{I}$ is the $d$-dimensional  identity matrix. 
        Under these geometric and structural assumptions, let the unknown function $u: \Omega \times [0,T] \rightarrow \mathbb{C}$ satisfy the following homogeneous initial-boundary value problem for the linear Schr\"{o}dinger equation:
\begin{equation}\label{1.1}
    \begin{cases}
        \mathrm{i}\partial_t u + \mathcal{L}_1 u = 0 & \text{in } \Omega \times (0,T], \\
        u(\mathbf{x},0) = \phi(\mathbf{x}) & \text{in } \Omega, \\
        u(\mathbf{x},t) = \psi(\mathbf{x},t) & \text{on } \partial\Omega \times [0,T],
    \end{cases}
\end{equation}
where $\mathrm{i}$ denotes $\sqrt{-1}$. Here the initial data $\phi\in C^{\infty}{(\Omega)}$, and boundary data $\psi \in C^\infty([0, T];\\ C(\partial \Omega))$ will be specified in Section \ref{section 3}.

Similarly, for the Cauchy problem of a nonlinear Schr\"{o}dinger-type equation in the full space $\mathbb{R}^d$, we consider a bounded Lipschitz domain $D \subset \mathbb{R}^d$ with a connected complement. For any given time interval $T \in \mathbb{R}_+$, we assume that the zero-order term $c(\mathbf{x},t)$ satisfies:
\begin{equation}\notag
    c \in L^{\infty}\bigl(0,T; W^{3,\infty}(\mathbb{R}^d)\bigr),
\end{equation}
and its spatial support is strictly contained in $D$ for all $t \in [0,T]$, i.e.,
\begin{equation}\notag
    \operatorname{supp} c(\cdot,t) \subset D, \qquad \forall t \in [0,T].
\end{equation}
We introduce the time-dependent partial differentiation operator $\mathcal{L}_2$ as follows:
\begin{equation}\notag
    \mathcal{L}_2 = 
\nabla \cdot (\mathscr{A}_2 
\nabla ) + c,
\end{equation}
where the complex-valued coefficients $\mathscr{A}_2(\mathbf{x},t) = \big(a_2^{ij}(\mathbf{x},t)\big)_{i,j=1}^d$ are given by the piecewise definition:
\begin{equation}\notag
    \mathscr{A}_2(\mathbf{x},t) =
    \begin{cases}
        \mathcal{B}(t), & (\mathbf{x},t) \in \overline{D} 	\times [0,T], \\[4pt]
        \mathbf{I}, & (\mathbf{x},t) \in (\mathbb{R}^d \setminus \overline{D}) 	\times [0,T].
    \end{cases}
\end{equation}
Here  $\mathcal{B}(t) = \big(b^{ij}(t)\big)_{i,j=1}^d$ is a Hermitian matrix such that $\mathcal{B} \in L^{\infty}[0,T]$. Moreover, the matrix $\mathcal{B}(t)$ satisfies the uniform ellipticity condition:
\begin{equation}\notag
    b^{ij}(t)\,\xi_i\,\overline{\xi_j} \ge 	\theta |\xi|^2, \quad \forall\, \xi \in \mathbb{C}^d, \; \forall\, t \in [0,T].
\end{equation}
with $\theta \in \mathbb R_+$.

Let the unknown function $U: \mathbb{R}^d 	\times [0,T] \rightarrow \mathbb{C}$ satisfy the following Cauchy problem for a nonlinear Schr\"{o}dinger equation:
\begin{equation}\label{1.2}
    \begin{cases}
        \mathrm{i}\partial_t U + \mathcal{L}_2 U + N(U) = 0 & 	\text{in } \mathbb{R}^{d} 	\times (0,T], \\
        U(\mathbf{x},0) = \Phi(\mathbf{x}) & 	\text{in } \mathbb{R}^d,
    \end{cases}
\end{equation}
where the nonlinearity $N: \mathbb{C} 	\to \mathbb{C}$ is a complex polynomial defined as:
\begin{equation}\label{eq:nonlinearity}
    N(U) = \sum_{k=2}^{l_{0}} \alpha_k U^k,
\end{equation}
with $l_0 \geq 2$ being a fixed integer. The complex-valued coefficients $\alpha_k$ satisfy the smoothness and localization conditions:
\begin{equation}\notag
    \alpha_k \in C^{\infty}([0,\infty) 	\times \mathbb{R}^d; \mathbb{C}), \quad k=2,\dots,l_0,
\end{equation}
with their spatial support strictly contained in $D$:
\begin{equation}\notag
    \operatorname{supp}(\alpha_k(\cdot,t)) \subset D, \quad \forall t \in [0,\infty).
\end{equation}
The smooth initial data $\Phi \in C^{\infty}(\mathbb{R}^d)$ will be specified in Section~\ref{section 3}.

We are now in a position to state our main theorems. For the initial-boundary value problem of the linear Schr\"odinger equation \eqref{1.1}, we have:

\begin{theorem}\label{thm:main}
For any arbitrarily large $\mathcal{M} > 0$, $n$ distinct points $\mathbf{x}_1,\mathbf{x}_2,\ldots,\mathbf{x}_n \in \partial D$, and any time interval $[0,T]$ with $0 < T < \infty$, by prescribing appropriate initial and boundary inputs (as specified in \eqref{initial and boundary condition}), there exists a small parameter $r_0$ given by \eqref{4.18} such that problem \eqref{1.1} admits a unique solution $u$ satisfying
\[
u \in L^{\infty}\big([0,T];H^{3}_{\mathrm{loc}}(\Omega)\big),
\]
with the following properties:
\begin{enumerate}
    \item \textit{Interior improved H\"older regularity}: for any bounded Lipschitz domain $\Omega' \Subset \Omega$,
    \[
    u \in L^{\infty}\big([0,T]; C^{1,\frac{1}{2}}(\overline{\Omega}')\big).
    \]

    \item \textit{Simultaneous localized gradient amplification near the points}
    $\mathbf{x}_1,\mathbf{x}_2,\ldots,\mathbf{x}_n$ \textit{in the following sense}: for almost every $t \in [0,T]$ and for each $i=1,2,\ldots,n$,
    \begin{equation*}
        \|\nabla u(\cdot,t)\|_{C^{0}\!\left(B_{3r_0}(\mathbf{x}_i)\setminus\overline{D}\right)}
        \ge \mathcal{M},
    \end{equation*}
    and
    \begin{equation*}
        \frac{\|u(\cdot,t)\|_{C^{1,\frac12}\!\left(B_{3r_0}(\mathbf{x}_i)\setminus\overline{D}\right)}}
        {\|u(\cdot,t)\|_{C^{1,\frac12}(\overline{D})}}
        > \frac{\mathcal{M}}{2}.
    \end{equation*}

    \item \textit{Vanishing measure of the localized gradient amplification set}: for almost every $t \in [0,T]$,
    \begin{equation*}
        \operatorname{meas}\left\{
        \mathbf{x} \in \bigcup_{i=1}^{n}\left(B_{3r_0}(\mathbf{x}_i)\setminus \overline{D}\right)
        : |\nabla u(\mathbf{x},t)| > \mathcal{M}
        \right\}
        = O(\mathcal{M}^{-d})
        \qquad \text{as } \mathcal{M}\to\infty,d=2, 3.
    \end{equation*}
\end{enumerate}
\end{theorem}

Similarly, for the Cauchy problem of the nonlinear Schr\"odinger-type equation \eqref{1.2}, we have:

\begin{theorem}\label{thm:main2}
For any arbitrarily large $\mathcal{M} > 0$, $n$ distinct points $\mathbf{x}_1,\mathbf{x}_2,\ldots,\mathbf{x}_n \in \partial D$, and any time interval $[0,T]$ with $0 < T < \infty$, by prescribing appropriate initial inputs (as specified in \eqref{initial condition}), there exists a small parameter $r_0$ given by \eqref{eeq 5.20} such that problem \eqref{1.2} admits a unique solution $U$ satisfying
\[
U \in L^{\infty}\big(0,T;H^{3}_{\mathrm{loc}}(\mathbb{R}^d)\big),
\]
with the following properties:
\begin{enumerate}
    \item \textit{H\"older regularity}: for any bounded Lipschitz domain $\Omega' \subset \mathbb{R}^d$,
    \[
    U \in L^{\infty}\big([0,T]; C^{1,\frac{1}{2}}(\overline{\Omega}')\big).
    \]

    \item \textit{Simultaneous localized gradient amplification near the points}
    $\mathbf{x}_1,\mathbf{x}_2,\ldots,\mathbf{x}_n$ \textit{in the following sense}: for almost every $t \in [0,T]$ and for each $i=1,2,\ldots,n$,
    \begin{equation*}
        \|\nabla U(\cdot,t)\|_{C^{0}\!\left(B_{3r_0}(\mathbf{x}_i)\setminus\overline{D}\right)}
        \ge \mathcal{M},
    \end{equation*}
    and
    \begin{equation*}
        \frac{\|U(\cdot,t)\|_{C^{1,\frac12}\!\left(B_{3r_0}(\mathbf{x}_i)\setminus\overline{D}\right)}}
        {\|U(\cdot,t)\|_{C^{1,\frac12}(\overline{D})}}
        > \frac{\mathcal{M}}{2}.
    \end{equation*}

    \item \textit{Vanishing measure of the localized gradient amplification set}: for almost every $t \in [0,T]$,
    \begin{equation*}
        \operatorname{meas}\left\{
        \mathbf{x} \in \bigcup_{i=1}^{n}\left(B_{3r_0}(\mathbf{x}_i)\setminus \overline{D}\right)
        : |\nabla U(\mathbf{x},t)| > \mathcal{M}
        \right\}
        = O(\mathcal{M}^{-d})
        \qquad \text{as } \mathcal{M}\to\infty,d=2, 3.
    \end{equation*}
\end{enumerate}
\end{theorem}

    \begin{remark}
    In Theorems \ref{thm:main} and \ref{thm:main2}, we identify and construct a mechanism that produces highly localized gradient amplification of solutions to both the linear and nonlinear Schr\"{o}dinger-type equations in two and three spatial dimensions. The phenomenon is described by the following two features:

\begin{enumerate}
    \renewcommand{\labelenumi}{(\roman{enumi})}
    \item
    For any prescribed set of $n$ distinct points 
    $\mathbf{x}_1,\mathbf{x}_2,\ldots,\mathbf{x}_n \in \partial D$, 
    where $D$ is the support of the anisotropic coefficients 
    $\mathscr{A}_1$ in \eqref{1.1} or $\mathscr{A}_2$ in \eqref{1.2}, 
    the zero-order term $c$ in \eqref{1.2}, and the nonlinear term $N$ in \eqref{1.2},
    we rigorously prove---by selecting suitably tuned smooth initial--boundary data
    for \eqref{1.1} or smooth initial data for \eqref{1.2}---that there exist
    localized gradient amplification regions in neighborhoods of each $\mathbf{x}_i$
    located outside $D$ ($i=1,\ldots,n$).
    In these regions, the $C^{0}$-norms of the spatial gradients of the solutions,
    namely $\nabla u(t)$ to \eqref{1.1} and $\nabla U(t)$ to \eqref{1.2},
    exceed any prescribed large constant $\mathcal{M}$ for almost every
    $t\in[0,T]$. Moreover, the ratio between the local $C^{1,\alpha}$-norm
    of the solution in these regions and the $C^{1,\alpha}$-norm inside $D$
    is bounded from below by $\mathcal{M}/2$.
    These properties are established in Theorems~\ref{thm:main}
    and~\ref{thm:main2}, where the time horizon $T\in\mathbb{R}_+$ is arbitrary.

    \item
    We further provide precise quantitative estimates for the measure of the
    localized gradient amplification regions in terms of the amplitude threshold
    $\mathcal{M}$, which characterizes the strength of the gradient amplification
    for the solutions $u$ and $U$ to \eqref{1.1} and \eqref{1.2}, respectively.
    Our analysis shows that, as $\mathcal{M} \to \infty$, the measure of these regions
    tends to zero, demonstrating that the gradient amplification phenomenon is
    sharply localized in space.
\end{enumerate}
    \end{remark}

	  To the best of our knowledge, the present work constitutes the first systematic and customized construction of solutions that exhibit localized gradient amplification for Schr\"{o}dinger-type equations across both linear and nonlinear regimes.


   \begin{remark}
    The mechanism developed in this work for producing customized solutions with localized gradient amplification exhibits several fundamental features:
    \begin{itemize}
        \item[(i)]  The construction is rooted in spectral analysis, specifically leveraging the transmission eigenfunctions associated with the interior transmission problem \cite{ColtonKress} and their approximation via Herglotz wave functions. By exploiting these spectral tools, we construct a precisely tuned auxiliary function $u_0$ (see Theorem~\ref{thm:main_result}), which serves as the essential building block for prescribing the initial and boundary inputs in both the linear and nonlinear regimes. Detailed construction procedures are provided in Section~\ref{section 3}.

        \item[(ii)]  A crucial aspect of our results is that the gradient amplification occurs within the framework of classical derivatives. Both the solution $u(t)$ to the linear problem \eqref{1.1} and $U(t)$ to the nonlinear problem \eqref{1.2} exhibit local $H^3$-regularity for almost every $t \in [0,T]$. By the Sobolev embedding theorem, this ensures that the solutions possess $C^{1,\frac{1}{2}}$-regularity in the neighborhoods of the amplification points. Consequently, the spatial gradients remain well-defined in the classical sense even as their magnitudes exceed the arbitrarily large threshold $\mathcal{M}$.

        \item[(iii)]  The localized gradient amplification is not a transient phenomenon but persists over any prescribed time interval $[0, T]$ for $T \in \mathbb{R}_+$. While this follows from classical well-posedness in the linear case, the nonlinear setting requires a more refined analysis. By carefully tuning the initial data and applying a Banach fixed-point argument, we ensure the stability and existence of the solution $U(t)$ over an arbitrarily large time horizon $T$, as further detailed in Remark~\ref{remark 5.3} and the proof of Theorem~\ref{thm:main2}.
    \end{itemize}
\end{remark}

	The remainder of this paper is organized as follows. Section 2 establishes the requisite mathematical preliminaries. In Section 3, we develop a unified framework for constructing initial and boundary inputs for both problems \eqref{1.1} and \eqref{1.2}. Section 4 presents the complete proof of Theorem~\ref{thm:main_result}, establishing well-posedness for the linear auxiliary problem \eqref{remaining term} and constructing solutions with localized gradient amplification through strategic parameter tuning. Section 5 extends this methodology to the nonlinear setting in Theorem~\ref{thm:main2}, where a fixed-point argument establishes well-posedness for the auxiliary problem \eqref{nonlinear hyperbolic equation with source term and zero initial condition} and proves the analogous localized gradient amplification behavior of the solution.

	\section{Preliminaries}
	
	Throughout this paper, the constant $C$ may change from line to line. In the statements of theorems, we explicitly specify the parameters on which constants depend. However, within proofs, for brevity and to avoid cumbersome notation, we typically do not detail the precise dependence of constants on various parameters. For particularly significant constants that play a crucial role in the argument, we assign specific labels such as $C_1$, $C_2$, and so forth. When the dependence on certain key parameters is essential to the analysis, we may highlight this relationship through notation such as $C_2(r_0)$ to indicate that the constant $C_2$ depends specifically on the parameter $r_0$.

	Let $J_m$ denote the Bessel function of the first kind and $j_m$ the spherical Bessel function. Let $j_{m,s}$ denote the $s$-th positive zero of $J_m$ and $j_{m,s}'$ the $s$-th positive zero of $J_m'$. According to \cite{Abramowitz} and \cite{LiuZou}, the following properties hold:
	\begin{equation*}
		m \leq j_{m,1}' < j_{m,1} < j_{m,2}' < j_{m,2} < j_{m,3}' < \cdots.
	\end{equation*}
	The Bessel function admits the Weierstrass factorization:
	\begin{equation*}\label{eq:bessel_factorization}
		J_m(x) = \frac{(|x|/2)^m}{\Gamma(m+1)} \prod_{s=1}^{\infty} \left(1 - \frac{x^2}{j_{m,s}^2}\right).
	\end{equation*}
	For integer or positive $\alpha$, the Bessel function has the power series expansion:
	\begin{equation}\label{eq:bessel_series}
		J_\alpha(x) = \sum_{m=0}^{\infty} \frac{(-1)^m}{m!\,\Gamma(m+\alpha+1)} \left(\frac{x}{2}\right)^{2m+\alpha}, \quad x > 0,
	\end{equation}
	where $\Gamma(x)$ is the gamma function. The relationship between $J_m$ and $j_m$ is given by:
	\begin{equation}\label{eq:bessel_spherical}
		j_m(x) = \sqrt{\frac{\pi}{2x}} J_{m+\frac{1}{2}}(x).
	\end{equation}
	The spherical harmonics $Y_m^l(\theta,\varphi)$ (see \cite[Theorem 2.8]{ColtonKress}) are defined as:
	\begin{equation}\label{eq:spherical_harmonics}
		Y_m^l(\theta, \varphi) = \sqrt{\frac{2m+1}{4\pi} \frac{(m-|l|)!}{(m+|l|)!}} P_m^{|l|}(\cos \theta) e^{i m \varphi},
	\end{equation}
	where $m = 0,1,2,\ldots$, $l = -m,\ldots,m$, and $P_m^{|l|}$ are the associated Legendre functions. The following estimate for Legendre functions will be used in the subsequent analysis. 
	\begin{lemma}\label{lem:legendre_bound}
		(see \cite{Lohofer}, Corollary 3). For real $x \in [-1,1]$ and integers $m$, $n$ with $1 \leq m \leq n$,
		\begin{equation*}
			\frac{1}{\sqrt{2.22(m+1)}} < \max_{x \in [-1,1]} |P_n^m(x)| \sqrt{\frac{(n-m)!}{(n+m)!}} < \frac{2^{5/4}}{\pi^{3/4}} \frac{1}{m^{1/4}}.
		\end{equation*}
	\end{lemma}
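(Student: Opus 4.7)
The plan is to follow the Löhöfer approach: reduce the assertion to a sharp bound on Jacobi polynomials via a Mehler-type oscillatory integral, then apply the method of stationary phase while carefully tracking numerical constants. Set $\widetilde{P}_n^m(x) := P_n^m(x) \sqrt{(n-m)!/(n+m)!}$, so that both inequalities concern $\|\widetilde{P}_n^m\|_{L^\infty([-1,1])}$ uniformly in $n \geq m \geq 1$.

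First I would invoke the classical identity
\[
P_n^m(\cos\theta) = (-1)^m\, \frac{(n+m)!}{2^m (n-m)!\, m!}\, \sin^m\theta \cdot P_{n-m}^{(m,m)}(\cos\theta),
\]
where $P_k^{(\alpha,\beta)}$ denotes the Jacobi polynomial. Absorbing the prefactor into the normalization rewrites $\widetilde{P}_n^m(\cos\theta)$ as an explicit factorial ratio (handled by Stirling's formula) multiplied by the weighted ultraspherical quantity $\sin^m\theta \cdot P_{n-m}^{(m,m)}(\cos\theta)$. This cleanly disentangles the $n$-dependence from the $m$-dependence.

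For the upper bound I would represent $P_{n-m}^{(m,m)}(\cos\theta)$ via its Mehler-type oscillatory integral and apply stationary phase. The phase has two symmetric turning points, each contributing an Airy-type term of order $m^{-1/4}$; tracking sharp constants through the Airy envelope and the Stirling reduction of the factorial prefactor is precisely what produces the constant $2^{5/4}/\pi^{3/4}$. For the lower bound I would first settle the extremal case $n=m$ directly: since $P_m^m(x) = (-1)^m (2m-1)!!\,(1-x^2)^{m/2}$, one has
\[
\max_{[-1,1]} |\widetilde{P}_m^m(x)| = \frac{(2m-1)!!}{\sqrt{(2m)!}} = \sqrt{\frac{(2m-1)!!}{2^m m!}},
\]
which by Wallis' product is of order $(\pi m)^{-1/4}$ and therefore comfortably exceeds $(2.22(m+1))^{-1/2}$ for every $m\geq 1$. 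For $n>m$, the same lower bound follows from the uniform stationary-phase expansion above, which supplies a matching lower envelope on the Airy transition zone, or alternatively from a Christoffel--Darboux-type monotonicity argument.

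The main obstacle is the sharp constant-tracking in both directions. Order-of-magnitude stationary-phase bounds for Jacobi polynomials are standard, but recovering precisely the constants $2^{5/4}/\pi^{3/4}$ and $2.22$ requires uniform Airy-type expansions with explicit Olver-type remainder estimates, combined with careful Stirling control of the factorial ratios across the entire range $1 \leq m \leq n$. The technical difficulty lies in these sharp constants rather than in the qualitative asymptotics, which is why the standard route in the literature is to invoke Löhöfer's analysis directly.
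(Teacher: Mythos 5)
The paper does not prove this lemma at all: it is stated as a direct citation of L\"oh\"ofer's Corollary~3 and used as a black box. So there is no in-paper argument to compare against, and your proposal is necessarily more ambitious than what the authors do. With that caveat, two issues with your sketch.

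First, the reduction identity you state is incorrect. The standard relation between associated Legendre functions and symmetric Jacobi polynomials is
\[
P_n^m(\cos\theta) \;=\; (-1)^m\,\frac{(n+m)!}{2^m\,n!}\,\sin^m\theta\;P_{n-m}^{(m,m)}(\cos\theta),
\]
with $n!$ in the denominator, not $(n-m)!\,m!$ as you wrote. (Check $n=2$, $m=1$: $P_2^1(\cos\theta)=-3\sin\theta\cos\theta$ and $P_1^{(1,1)}(x)=2x$, so the prefactor must be $3/2$, i.e.\ $(n+m)!/(2^m n!)=6/4$, whereas your version gives $3$.) The two coincide only when $n=m$, which is why your check of the extremal case $P_m^m$ came out right and masked the error. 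Since this prefactor is precisely what gets fed into Stirling to produce the stated constants, the mistake is not cosmetic; it would throw off the entire constant-tracking you correctly identify as the crux.

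Second, the lower-bound argument for $n>m$ is only asserted, not given. Your $n=m$ computation is fine and yields a bound of order $(\pi m)^{-1/4}$, comfortably above $(2.22(m+1))^{-1/2}$, but that is in fact the easy regime. The binding case for the lower bound is $n\to\infty$ at fixed $m$, where the normalized maximum decays toward its limiting value, and neither ``a matching lower envelope on the Airy transition zone'' nor ``a Christoffel--Darboux-type monotonicity argument'' is spelled out enough to count as a proof. You acknowledge this yourself and end by deferring to L\"oh\"ofer --- which is exactly what the paper does --- so in the end both you and the authors are really just citing the result, but your intermediate sketch contains a concrete algebraic error and an unsupported step that would need repair before it could be promoted to a proof.
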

		In what follows, we assume that $\omega$ is a fixed constant. The Herglotz function is defined as
	\begin{equation}\label{Herglotz}
		H_{g}(\mathbf{x})=\int_{\mathbb S^{d-1}} g(\theta) \exp(\mathrm{i} \omega \mathbf{x} \cdot \theta) d\theta, \quad \mathbf{x} \in \mathbb{R}^d,
	\end{equation}
	where $g \in L^2(\mathbb{S}^{d-1})$ is the Herglotz kernel. We now state a key lemma that will be used in the next section.
	
	\begin{lemma}(see~\cite{Weck})\label{Herglotz lemma}
		Let $\mathcal{O}$ be a bounded domain of class $C^{\alpha,1}$, $\alpha \in \mathbb{N} \cup \{0\}$, in $\mathbb{R}^d$. Denote by $\mathbb{T}$ the space of all Herglotz functions of the form (\ref{Herglotz}). Define
		\[
		\mathbb{T}(\mathcal{O}) := \left\{\left.V\right|_{\mathcal{O}} : V \in \mathbb{H}\right\}
		\]
		and
		\begin{equation*}\label{wave number}
			\mathfrak{H}(\mathcal{O}) := \left\{u \in C^{\infty}(\mathcal{O}) : \Delta V + \kappa^2 V = 0 \text{ in } \mathcal{O}\right\},
		\end{equation*}
		where $\kappa\in \mathbb R_+$. Then $\mathbb{T}(\mathcal{O})$ is dense in $\mathbb{T}(\mathcal{O}) \cap H^{\alpha+1}(\mathcal{O})$ with respect to the $H^{\alpha+1}(\mathcal{O})$-norm.
	\end{lemma}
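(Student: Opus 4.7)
The plan is to reduce the density statement to a dual condition via Hahn-Banach. By the standard geometric form of the Hahn-Banach theorem, it suffices to prove that any bounded linear functional $L$ on $H^{\alpha+1}(\mathcal{O})$ which vanishes on $\mathbb{H}(\mathcal{O})$ must also annihilate every $u\in\mathfrak{H}(\mathcal{O})\cap H^{\alpha+1}(\mathcal{O})$. Using the usual duality identification of $(H^{\alpha+1}(\mathcal{O}))^\ast$ with a subspace of $H^{-\alpha-1}(\mathbb{R}^d)$ consisting of distributions with support in $\overline{\mathcal{O}}$, I would represent $L$ by such a compactly supported distribution $f$. Substituting $u=H_g$ and interchanging the order of integration yields
\[
\int_{\mathbb{S}^{d-1}}\widehat{f}(\omega\theta)\,g(\theta)\,d\theta=0
\]
for every $g\in L^2(\mathbb{S}^{d-1})$, which forces $\widehat{f}(\omega\theta)=0$ on the whole sphere $\{|\xi|=\omega\}$. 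By the Paley-Wiener-Schwartz theorem, $\widehat{f}$ is entire of exponential type, and it vanishes identically on the Helmholtz characteristic variety.

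Next, I would introduce the outgoing volume potential $v(\mathbf{x})=\langle f(\mathbf{y}),\Phi(\mathbf{x}-\mathbf{y})\rangle$, where $\Phi$ is the radiating fundamental solution of $-\Delta-\omega^2$. Then $v$ satisfies $(-\Delta-\omega^2)v=f$ globally, solves the homogeneous Helmholtz equation on $\mathbb{R}^d\setminus\overline{\mathcal{O}}$, and obeys the Sommerfeld radiation condition there; moreover its far-field pattern is (up to a dimensional constant) exactly the restriction of $\widehat{f}$ to the sphere of radius $\omega$. The vanishing of this far-field, combined with Rellich's lemma and unique continuation, forces $v\equiv 0$ on the unbounded component of $\mathbb{R}^d\setminus\overline{\mathcal{O}}$; a careful support-based decomposition of $f$ handles any bounded components so that in fact $v\equiv 0$ on all of $\mathbb{R}^d\setminus\overline{\mathcal{O}}$. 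Consequently both Cauchy data of $v$ vanish on $\partial\mathcal{O}$ in the appropriate trace sense. Applying Green's identity on $\mathcal{O}$ to the pair $(u,v)$, together with $(-\Delta-\omega^2)u=0$ inside $\mathcal{O}$ and $(-\Delta-\omega^2)v=f$ inside $\mathcal{O}$, then yields $L(u)=\langle f,u\rangle=0$, as desired.

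The principal obstacle will be making the final Green's-identity step rigorous at the correct Sobolev regularity, because $f\in H^{-\alpha-1}$ yields only $v\in H^{-\alpha+1}_{\mathrm{loc}}$, so for $\alpha\geq 1$ the traces of $v$ and $\partial_\nu v$ on $\partial\mathcal{O}$ exist merely as elements of suitable negative-order Sobolev spaces on the boundary and must be paired by duality against the positive-order traces of $u$. This is precisely where the hypothesis that $\partial\mathcal{O}$ is of class $C^{\alpha,1}$ enters: it provides exactly the boundary regularity required for the trace operator and the associated Green's formula to extend to the full Sobolev scale $H^{\alpha+1}(\mathcal{O})\leftrightarrow H^{-\alpha-1}(\mathcal{O})$ so that the boundary pairings, and hence $\langle f,u\rangle$, vanish. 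A subsidiary subtlety is non-convexity of $\mathcal{O}$, where one must arrange that $v$ vanishes on every bounded component of the exterior; this can be handled by decomposing $f$ into pieces whose supports lie in the closures of individual components of $\mathbb{R}^d\setminus\overline{\mathcal{O}}^c$ and applying the previous argument componentwise.
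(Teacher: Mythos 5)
The paper states this lemma as a citation to Weck (2004) and supplies no proof of its own, so there is no in-paper argument to compare against; I therefore assess your sketch on its own terms. Your outline is the classical duality route: Hahn--Banach, representation of the annihilating functional as $f\in H^{-\alpha-1}$ with $\operatorname{supp}f\subset\overline{\mathcal{O}}$, the identity $\int_{\mathbb{S}^{d-1}}\widehat f(\omega\theta)g(\theta)\,d\theta=0$ forcing $\widehat f$ to vanish on the characteristic sphere, the radiating volume potential $v=\Phi*f$ whose far-field pattern is $\widehat f|_{|\xi|=\omega}$, Rellich plus unique continuation to kill $v$ in the exterior, and a Green's identity closed at the $H^{\alpha+1}(\mathcal{O})\leftrightarrow H^{-\alpha-1}_{\overline{\mathcal{O}}}$ duality level. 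Your reading of where the $C^{\alpha,1}$ hypothesis enters (higher-order trace theorem and extension of Green's formula) is correct and is a genuine technical point, but one that goes through along standard lines.

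The genuine gap is your treatment of bounded components of $\mathbb{R}^d\setminus\overline{\mathcal{O}}$, which cannot be patched the way you propose. First, the proposed decomposition is vacuous: $\mathbb{R}^d\setminus\overline{\mathcal{O}}^{\,c}=\overline{\mathcal{O}}$, and in any case $\operatorname{supp}f\subset\overline{\mathcal{O}}$, so splitting $f$ according to exterior components has nothing to act on. Second, and more importantly, the difficulty is not merely technical: Rellich together with analyticity of Helmholtz solutions forces $v\equiv 0$ only on the \emph{unbounded} component of $\mathbb{R}^d\setminus\overline{\mathcal{O}}$; on a bounded hole $v$ solves the homogeneous Helmholtz equation with no reason to vanish, so the Green's-identity step leaves an uncontrolled contribution from the inner boundary. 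Indeed the density statement genuinely \emph{fails} when the exterior is disconnected: for $\mathcal{O}$ an annulus, the solution $Y_0(\omega r)$ (Bessel of the second kind) lies in $\mathfrak{H}(\mathcal{O})\cap H^{\alpha+1}(\mathcal{O})$, yet no entire Herglotz function can approximate it in $H^1(\mathcal{O})$ because the zeroth radial Fourier mode of a Herglotz function is a multiple of $J_0(\omega r)$, which is linearly independent of $Y_0(\omega r)$ on any interval. The hypothesis that $\mathbb{R}^d\setminus\overline{\mathcal{O}}$ be connected must therefore be added and invoked in the Rellich step; this is the setting to which the paper actually applies the lemma, with $\mathcal{O}=D_{r_0/4}\cup\bigcup_i B_{r_0}(\mathbf{y}_i)$, but it cannot be dispensed with by decomposing $f$.
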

	
	\begin{remark}\label{remark 2.3}
		The Herglotz function $H_g$ defined in \eqref{Herglotz} satisfies the Helmholtz equation $\Delta H_g + \omega^2 H_g = 0$ in $\mathbb{R}^d$. By standard elliptic regularity theory, $H_g$ is analytic with respect to the spatial variable. A key property of Herglotz functions is that they are generally not square-integrable over the entire space; specifically, $H_g \in L^{2}_{\text{loc}}(\mathbb{R}^d)$ but $H_g \notin L^{2}(\mathbb{R}^d)$ for non-trivial kernels $g$. 
	\end{remark}

	\section{A general framework for specifying inputs}\label{section 3}
	
	In this section, we establish the general framework for constructing initial and boundary inputs for problems \eqref{1.1} and \eqref{1.2}. To achieve this, we employ spectral analysis, interior transmission problems, and Herglotz approximation techniques to construct a function $u_0$ that will serve as the foundation for our desired inputs.
	
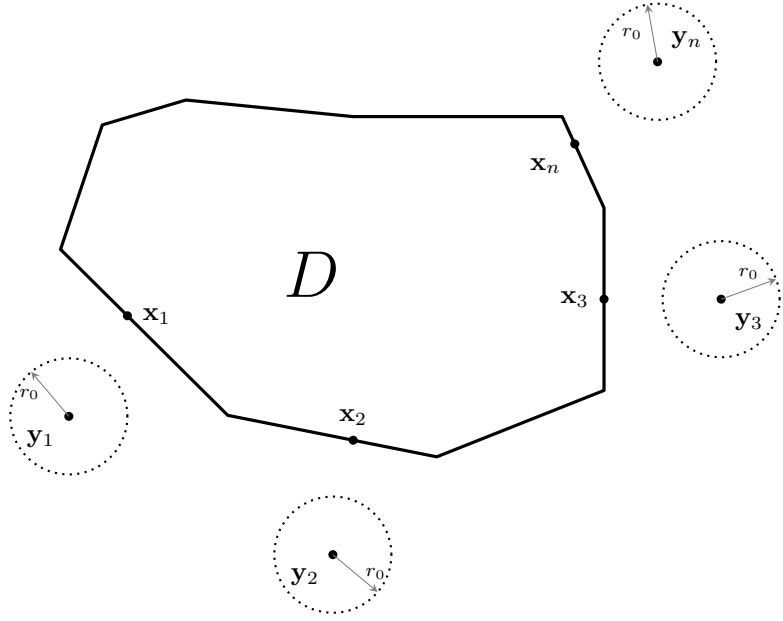
\begin{figure}[htbp]
	\centering
	\tikzset{
		main poly/.style={draw=black, line width=1.2pt, line join=round}, 
		dotted circle/.style={draw=black, dotted, line width=0.8pt},
		point dot/.style={circle, fill=black, inner sep=1.2pt},
		math font/.style={font=\small},
		radius line/.style={draw=gray, thin, ->, >=stealth}
	}
	
	\begin{tikzpicture}[scale=1.1]
		
		\def\rzero{0.7} 
		\def\dist{1.4}  
		
		\coordinate (v1) at (-3, 0.5);
		\coordinate (v2) at (-2.5, 2);
		\coordinate (v3) at (-1.5, 2.3);
		\coordinate (v4) at (0.5, 2.1);
		\coordinate (v5) at (3, 2.1);
		\coordinate (v6) at (3.5, 1);
		\coordinate (v7) at (3.5, -1.2);
		\coordinate (v8) at (1.5, -2);
		\coordinate (v9) at (-1, -1.5);
		
		\draw[main poly] (v1) -- (v2) -- (v3) -- (v4) -- (v5) -- (v6) -- (v7) -- (v8) -- (v9) -- cycle;
		\node[font=\Huge] at (0, 0.2) {$D$};
		
		
		\path (v9) -- (v1) coordinate[pos=0.6] (x1);
		\node[point dot, label={[math font]right:$\mathbf{x}_1$}] at (x1) {};
		\coordinate (y1) at ([shift={(240:\dist)}]x1);
		\draw[dotted circle] (y1) circle (\rzero);
		\node[point dot, label={[math font]below left:$\mathbf{y}_1$}] at (y1) {};
		\draw[radius line] (y1) -- ++(130:\rzero) node[midway, left, font=\tiny] {$r_0$};
		
		\path (v8) -- (v9) coordinate[pos=0.4] (x2);
		\node[point dot, label={[math font]above:$\mathbf{x}_2$}] at (x2) {};
		\coordinate (y2) at ([shift={(-100:\dist)}]x2);
		\draw[dotted circle] (y2) circle (\rzero);
		\node[point dot, label={[math font]below left:$\mathbf{y}_2$}] at (y2) {};
		\draw[radius line] (y2) -- ++(-40:\rzero) node[midway, right, font=\tiny] {$r_0$};
		
		\path (v6) -- (v7) coordinate[pos=0.5] (x3);
		\node[point dot, label={[math font]left:$\mathbf{x}_3$}] at (x3) {};
		\coordinate (y3) at ([shift={(0:\dist)}]x3);
		\draw[dotted circle] (y3) circle (\rzero);
		\node[point dot, label={[math font]below right:$\mathbf{y}_3$}] at (y3) {};
		\draw[radius line] (y3) -- ++(20:\rzero) node[midway, above, font=\tiny] {$r_0$};
		
		\path (v5) -- (v6) coordinate[pos=0.3] (xn);
		\node[point dot, label={[math font]below left:$\mathbf{x}_n$}] at (xn) {};
		\coordinate (yn) at ([shift={(45:\dist)}]xn);
		\draw[dotted circle] (yn) circle (\rzero);
		\node[point dot, label={[math font]above right:$\mathbf{y}_n$}] at (yn) {};
		\draw[radius line] (yn) -- ++(100:\rzero) node[midway, left, font=\tiny] {$r_0$};
		
	\end{tikzpicture}
	\caption{Geometric configuration where each external point $\mathbf{y}_i$ is placed at a uniform distance from its corresponding boundary point $\mathbf{x}_i \in \partial D$.}
	\label{fig:geometry_uniform}
\end{figure}
	
	\begin{theorem}\label{thm:main_result}
		For any predetermined positive parameters $\varepsilon < 1$, $r_0$ sufficiently small, and $n$ distinct points $\mathbf{x}_1,\dots,\mathbf{x}_n \in \partial D$, we can properly choose corresponding $n$ balls satisfying:
		\begin{enumerate}
			\renewcommand{\labelenumi}{(\roman{enumi})}
			\item For problem \eqref{1.1} $($bounded domain $\Omega$ with $D \Subset \Omega):$ $B_{r_0}(\mathbf{y}_i) \Subset \Omega \setminus \overline{D}$,
			\item For problem \eqref{1.2} $($full space $\mathbb{R}^d ):$ $B_{r_0}(\mathbf{y}_i) \Subset \mathbb{R}^d \setminus \overline{D}$,
		\end{enumerate}
		such that $|\mathbf{x}_i - \mathbf{y}_i| = 2r_0$, the balls are mutually disjoint and
		$$
		\operatorname{dist}(\partial D, B_{r_0}(\mathbf{y}_i))>\frac{r_0}{4},\ \forall i=1,2...,n.
		$$
	Then, there exists an integer $M := M(r_0)$ and a smooth function $u_0$ with the following properties:
		\begin{enumerate}
			\item $u_0$ has the form$:$
			\begin{equation*}
				u_0(\mathbf{x},t) = H_g(\mathbf{x})\exp(-\mathrm{i} t),
			\end{equation*}
			where $ H_g(\mathbf{x})$ is a Herglotz function given by \eqref{eq Hg}. 
			
			\item $u_0$ satisfies the free Schr\"odinger equation$:$
			\[
			\mathrm{i}\partial_{t} u_0 + \Delta u_0 = 0 \quad \text{in } \mathbb{R}^d \times [0,\infty).
			\]
			
			\item It exhibits small $C^{1,\frac{1}{2}}$-norm in $\overline{D}:$
			\begin{equation}\label{u0 smalles}
				||u_0(\cdot,t)||_{C^{1,\frac{1}{2}}(\overline{D})} < C_1\varepsilon \quad \text{for all}~ t \in [0,\infty),
			\end{equation}
			where constant $C_1$ depends on $D$ and $d$.
			
			\item For any predetermined integer $m \geqslant M(r_0,\omega)$, there exist points $\mathbf{y}^*_i \in \partial B_{r_0}(\mathbf{y}_i)$  ($i=1,\dots,n$) with large amplitude $u_0(\mathbf{y}^*_i,t)$ such that
			\begin{align}
				|u_0(\mathbf{y}^*_i,t)| &> \frac{\sqrt{2m+2}\,r_0^{-1}}{2\sqrt{2\pi}} - C_2\varepsilon, & d &= 2, \label{2 d} \\
				|u_0(\mathbf{y}^*_i,t)| &> \frac{\sqrt{2m+3}\,r_0^{-3/2}}{16} - C_2\varepsilon, & d &= 3, \label{3 d}
			\end{align}
			for all $t \in [0,\infty)$, where constant $C_2$ depends on $r_0$, $D$, and $d$.
		\end{enumerate}
	\end{theorem}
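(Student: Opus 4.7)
\emph{Proof proposal.}

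The plan is to construct an auxiliary Helmholtz solution $v$ on a disconnected open set $\mathcal{O}$ containing both $D$ and the amplification balls, realize $v$ up to arbitrarily small error as the restriction of a Herglotz function $H_g$ via Lemma \ref{Herglotz lemma}, and then set $u_0(\mathbf{x},t) := H_g(\mathbf{x})\,e^{-\mathrm{i}t}$. The temporal factor is chosen so that $\mathrm{i}\,\partial_t u_0 + \Delta u_0 = e^{-\mathrm{i}t}(H_g + \Delta H_g)$ vanishes exactly when $H_g$ satisfies $\Delta H_g + H_g = 0$, corresponding to the fixed wave number $\omega = 1$; this yields property~(2) automatically, and property~(1) follows from the Herglotz representation of $H_g$. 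For the geometry, I would place each $\mathbf{y}_i$ along the outward normal direction to $\partial D$ at $\mathbf{x}_i$ at distance $2r_0$; for $r_0$ small the balls $B_{r_0}(\mathbf{y}_i)$ are pairwise disjoint, external to $\overline{D}$, and separated from $\partial D$ by at least $r_0/4$. I then set $\mathcal{O} := D \cup \bigcup_{i=1}^n B_{r_0}(\mathbf{y}_i)$, whose complement in $\mathbb{R}^d$ is connected and unbounded.

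Next I would specify the target function $v \colon \mathcal{O} \to \mathbb{C}$ by $v \equiv 0$ on $D$ and, on each ball $B_{r_0}(\mathbf{y}_i)$, a carefully normalized Helmholtz mode. In $d = 2$, using local polar coordinates $(r_i, \theta_i)$ around $\mathbf{y}_i$, take $v_i(\mathbf{x}) := \lambda_m\,J_m(r_i)\,e^{\mathrm{i}m\theta_i}$, with $m \geq M(r_0)$ chosen large enough that $r_0 < j_{m,1}$ and therefore $J_m(r_0) \neq 0$, and set $\lambda_m := \sqrt{2m+2}\,\bigl(2\sqrt{2\pi}\,r_0\,|J_m(r_0)|\bigr)^{-1}$; then $|v_i(\mathbf{y}_i^*)| = \sqrt{2m+2}\,(2\sqrt{2\pi}\,r_0)^{-1}$ for every $\mathbf{y}_i^* \in \partial B_{r_0}(\mathbf{y}_i)$. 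In $d = 3$, take $v_i(\mathbf{x}) := \lambda_{m,l}\,j_m(r_i)\,Y_m^l(\theta_i, \varphi_i)$ via \eqref{eq:bessel_spherical}--\eqref{eq:spherical_harmonics}, use Lemma \ref{lem:legendre_bound} to select $l$ so that $\max_{\mathbb{S}^2}|Y_m^l|\gtrsim \sqrt{m}$, let $\mathbf{y}_i^*$ be a maximizer on $\partial B_{r_0}(\mathbf{y}_i)$, and choose $\lambda_{m,l}$ so that $|v_i(\mathbf{y}_i^*)| \geq \sqrt{2m+3}/(16\,r_0^{3/2})$. In both dimensions, $v$ is smooth on each component of $\mathcal{O}$ and solves $\Delta v + v = 0$ there.

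Then I would apply the Herglotz density: Lemma \ref{Herglotz lemma}, extended to our bounded open set $\mathcal{O}$ (whose complement contains no bounded components) via standard Runge-type arguments for the Helmholtz operator, provides for any $\delta > 0$ and integer $\alpha \geq 0$ a Herglotz kernel $g \in L^2(\mathbb{S}^{d-1})$ with $\|H_g - v\|_{H^{\alpha+1}(\mathcal{O})} < \delta$. I would take $\alpha = 2$ for $d = 2$ and $\alpha = 3$ for $d = 3$, so that the Sobolev embedding $H^{\alpha+1}(\mathcal{O}) \hookrightarrow C^{1,1/2}(\overline{\mathcal{O}})$ applies (enlarging $D$ slightly to a $C^{\alpha,1}$ domain if needed) and gives $\|H_g - v\|_{C^{1,1/2}(\overline{\mathcal{O}})} \leq C_S\,\delta$. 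Since $v \equiv 0$ on $\overline{D}$,
\[
\|u_0(\cdot,t)\|_{C^{1,1/2}(\overline{D})} = \|H_g\|_{C^{1,1/2}(\overline{D})} \leq C_S\,\delta,
\]
so choosing $\delta := C_1\varepsilon / C_S$ delivers \eqref{u0 smalles}; at each $\mathbf{y}_i^*$,
\[
|u_0(\mathbf{y}_i^*, t)| = |H_g(\mathbf{y}_i^*)| \geq |v_i(\mathbf{y}_i^*)| - C_S\,\delta,
\]
which produces \eqref{2 d} and \eqref{3 d} with $C_2$ absorbing dimensional constants.

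The hard part will be designing the local mode $v_i$ so that its pointwise value at $\mathbf{y}_i^*$ matches the precise constants in \eqref{2 d}--\eqref{3 d}: this hinges on the correct normalization $\lambda_m$ (resp.\ $\lambda_{m,l}$) and, in three dimensions, on selecting a spherical harmonic whose maximum on $\mathbb{S}^2$ is bounded from below by Lemma \ref{lem:legendre_bound}. The threshold $M(r_0)$ enters precisely to ensure that the relevant Bessel function does not vanish at $r_0$, so that the normalizing constant is well-defined. The Herglotz density step is qualitative (it always yields arbitrarily small $H^{\alpha+1}$-error), and its combination with the Sobolev embedding is what simultaneously transfers smallness into the $C^{1,1/2}$ bound on $\overline{D}$ and pointwise largeness at $\mathbf{y}_i^*$.
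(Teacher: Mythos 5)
Your proposal is correct and follows the paper's overall architecture (a target $v$ that vanishes on a smoothed copy of $D$ and equals a Helmholtz mode on each ball, Herglotz density via Lemma~\ref{Herglotz lemma}, Sobolev embedding into $C^{1,\frac12}$), but it genuinely differs in how the interior mode is normalized. The paper normalizes $v$ so that $\|v\|_{L^2(B_{r_0}(\mathbf y_i))}=1$, frames it as a transmission eigenfunction for \eqref{Interior Transmission Problem}, and then proves via the Bessel-asymptotics Lemma~\ref{I_1,I_2,I_3,I_4} (the $I_1,\dots,I_4 \to 0$ estimates) together with Lemma~\ref{lem:legendre_bound} that the $L^2$-normalized eigenfunction has boundary amplitude at least $\tfrac12\sqrt{2m+d}\,r_0^{-(d+1)/2}$ once $m\ge M(r_0)$; the thresholds in \eqref{2 d}--\eqref{3 d} thus arise as asymptotic boundary values of unit-$L^2$-norm eigenfunctions. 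You instead take the normalization constant $\lambda_m$ (resp.\ $\lambda_{m,l}$) as a free parameter and set the boundary value equal to the target directly, which only needs $J_m(r_0)\neq 0$, i.e.\ $m>r_0$. Since Lemma~\ref{Herglotz lemma} provides arbitrarily small $H^{\alpha+1}$ error for any \emph{fixed} target, regardless of its size, this is legitimate, and it renders both Lemma~\ref{I_1,I_2,I_3,I_4} and (in $d=3$) Lemma~\ref{lem:legendre_bound} unnecessary — once $\lambda_{m,l}$ is free, any $l$ with $\max|Y_m^l|>0$ serves. Your route is thus more elementary for the theorem as stated; what it gives up is the interpretation of the amplitude thresholds as intrinsic to $L^2$-normalized transmission eigenfunctions, which is part of the paper's conceptual narrative but not logically required here.

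Two small points, neither a gap. First, the paper approximates in $H^5(\mathcal O)$ rather than the minimal $H^{\alpha+1}$ needed for the $C^{1,\frac12}$ embedding; the extra regularity is not used in this theorem's conclusions, but it is used downstream (e.g.\ $\|H_g\|_{H^3(D)}$ in \eqref{estimate on F_1} and $\|\partial_{\mathbf x}^{\alpha}u_0\|_{L^\infty(D)}$ for $|\alpha|\le 3$ in \eqref{5.2}), so your choice of $\alpha$ would have to be raised if the proof were to feed into Sections~\ref{sec:4}--\ref{sec:5}. Second, writing $\delta := C_1\varepsilon/C_S$ is circular since $C_1$ is precisely what the proof is to produce; cleaner is to take $\delta=\varepsilon$ and set $C_1:=C_S$, $C_2:=C_S$ as the Sobolev constants. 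Also, to get the strict inequalities in \eqref{u0 smalles}, \eqref{2 d}, \eqref{3 d}, either take $\delta<\varepsilon$ or bump the interior normalization slightly above the threshold, mirroring the paper's appeal to continuity at the end of its proof.
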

	
	\begin{remark}\label{cf 3.5}
		From now and remaining of this paper, we denote by $\varepsilon$, $r_0$ the parameters, and by $u_0$ the function, specified in Theorem \ref{thm:main_result}.
	\end{remark}
	
	\begin{remark}\label{remark 3.2}
		For notation convenience, we denote $C_2 := C_2(r_0)$. In particular, by \eqref{important estimate for H_g}, \eqref{3.17}, and the Sobolev embedding theorem, $C_2 \to \infty$ as $r_0 \to 0$. The constants $C_1$ and $C_2$ will be used to determine $\varepsilon$, $r_0$, and $m$ in the proofs of Theorems \ref{thm:main} and \ref{thm:main2}.
	\end{remark}
	
	\begin{remark}
		The construction distinguishes between two physical settings:
		\begin{enumerate}
			\renewcommand{\labelenumi}{(\roman{enumi})}
			\item For the linear problem \eqref{1.1} in the bounded domain $\Omega$ (with $D \Subset \Omega$), the balls satisfy $B_{r_0}(\mathbf{y}_i) \Subset \Omega \setminus \overline{D}$.
			\item For the nonlinear problem \eqref{1.2} in the full space $\mathbb{R}^d$, the balls satisfy $B_{r_0}(\mathbf{y}_i) \Subset \mathbb{R}^d \setminus \overline{D}$.
		\end{enumerate}
		Importantly, the selection of these balls is highly flexible they can be chosen arbitrarily within the respective domains, subject only to the above containment conditions. Despite these domain differences, the constructed $u_0$ shares identical properties in both cases. For simplicity, we do not distinguish between the corresponding $u_0$ functions in subsequent analysis.
	\end{remark}
	
	Prior to proving Theorem \ref{thm:main_result}, we establish the following auxiliary result:
	\begin{lemma}\label{I_1,I_2,I_3,I_4}
		For any fixed $r_0$, the series $I_{i}(m,r_0)\rightarrow 0$ as $m \rightarrow +\infty$ for $i=1,2,3,4$, where
		\begin{align*}
			I_1(m,r_0) &:= \sum_{k=1}^{\infty}\frac{(-1)^k\Gamma(m+\frac{3}{2})\sqrt{2m+3} }{k!\Gamma(m+k+\frac{3}{2})}\left(\frac{ r_0}{2}\right)^{2k}r_0^{-\frac{3}{2}}, \\
			I_2(m,r_0) &:= \sum_{k=1}^{\infty}\sum_{k_1+k_2=k}\left(\frac{\Gamma^2(m+\frac{3}{2})}{k_1!k_2!\Gamma(m+k_1+\frac{3}{2})\Gamma(m+k_2+\frac{3}{2})}\right)  \\
			& \qquad \times(-1)^k\left(\frac{ r_0}{2}\right)^{2k}\frac{2m+3}{2m+2k+3}, \\
			I_3(m,r_0) &:= \sum_{k=1}^{\infty}\frac{(-1)^k}{k!}\frac{\Gamma(m+1)\sqrt{2m+2}}{\Gamma(m+k+1)}\left(\frac{ r_0}{2}\right)^{2k}r^{-1}_0, \\
			I_4(m,r_0) &:= \sum_{k=1}^{\infty}\left(\sum_{k_1+k_2=k}\frac{\Gamma^{2}(m+1)}{k_1!k_2!\Gamma(m+k_1+1)\Gamma(m+k_2+1)}\right) \nonumber \\
			& \qquad \times(-1)^k\left(\frac{ r_0}{2}\right)^{2k}\frac{2m+2}{2m+2k+2}.
		\end{align*}
		\begin{proof}
			Set $\nu=m+\frac{1}{2}$, so that $\Gamma(m+k+\frac{3}{2})=\Gamma(\nu+k+1)$ and $\sqrt{2m+3}=\sqrt{2\nu+2}$. Using \eqref{eq:bessel_series}, we obtain
			\begin{equation}\label{I_1 1}
				I_1=\sqrt{2\nu +2} r_0^{-\frac{3}{2}}[\Gamma(\nu+1)(\frac{r_0}{2})^{-\nu}J_{\nu}(r_0)-1].
			\end{equation}
			Let $\beta=(\frac{r_0}{2})^2$ and $\gamma_k(\nu)=\frac{1}{(\nu+k)(\nu+k-1)\cdots(\nu+1)}$. By the Gamma function recurrence and \eqref{eq:bessel_series},
			\begin{equation}\label{I_1 2}
				\begin{aligned}
					|\Gamma(\nu+1)(\frac{r_0}{2})^{-\nu}J_{\nu}(r_0)-1|&=|\sum_{k=1}^{\infty}\frac{(-1)^k}{k!}\beta^k\gamma_k(\nu)|\\
					&\leqslant\exp(\frac{\beta}{\nu})-1\leqslant \frac{\beta}{\nu}\exp(\frac{\beta}{\nu})\leqslant \frac{\beta}{\nu}\exp(\beta),
				\end{aligned}
			\end{equation}
			where we used $|\gamma_k(\nu)|\leqslant \nu^{-k}$. Combining \eqref{I_1 1} and \eqref{I_1 2} yields
			\begin{equation*}
				|I_1|\leqslant \frac{\sqrt{2\nu +2}}{\nu} r_0^{-\frac{3}{2}}\beta\exp(\beta) =\frac{\sqrt{2m +3}}{m+\frac{1}{2}} r_0^{-\frac{3}{2}}(\frac{ r_0}{2})^2\exp(\frac{ r_0}{2})^2\rightarrow 0 
			\end{equation*}
			as $m \rightarrow \infty$.
			
			For $I_2$, rewrite it as
			\[
			\begin{aligned}
				I_2\left(m, r_0\right)=(2 m+3) \sum_{k=1}^{\infty} \frac{(-1)^k\left(r_0 / 2\right)^{2 k}}{2 m+2 k+3} \cdot S_k(m),
			\end{aligned}
			\]
			where 
			\[
			S_k(m)=\sum_{\substack{k_1+k_2=k \\ k_1, k_2 \geq 0}} \frac{1}{k_{1}!k_{2}!} \cdot \frac{\Gamma^2\left(m+\frac{3}{2}\right)}{\Gamma\left(m+k_1+\frac{3}{2}\right) \Gamma\left(m+k_2+\frac{3}{2}\right)}.
			\]
			For fixed $j \geq 0$, the Gamma recurrence implies
			\[
			\frac{\Gamma\left(m+\frac{3}{2}\right)}{\Gamma\left(m+j+\frac{3}{2}\right)} \leq \frac{1}{\left(m+\frac{3}{2}\right)^j}.
			\]
			Applying this to $S_k(m)$ and noting $\sum_{k_1+k_2=k} \frac{1}{k_{1}!k_{2}!}=\frac{2^k}{k!}$, we get
			\[
			S_k(m) \leq \frac{2^k}{k!\left(m+\frac{3}{2}\right)^k}.
			\]
			Thus,
			\begin{equation*}
				\begin{aligned}
					\left|I_2(m)\right| &\leq(2 m+3) \sum_{k=1}^{\infty} \frac{1}{2 m+2 k+3} \cdot \frac{\left( r_0^2 / 2\right)^k}{k!\left(m+\frac{3}{2}\right)^k}\\
					&\leqslant \sum_{k=1}^{\infty} \frac{\left( r_0^2 / 2\right)^k}{k!\left(m+\frac{3}{2}\right)^k}=\exp(\frac{ r_0^2}{2m+3})-1\rightarrow 0
				\end{aligned}
			\end{equation*}
			as $m \rightarrow \infty$. The same approach applies to $I_3$ and $I_4$, so their proofs are omitted.
			
			The proof is complete.
		\end{proof}
	\end{lemma}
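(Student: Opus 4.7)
The plan is to handle the four series by a single unified strategy: bound each Gamma-ratio by a simple geometric factor, recognize the resulting sum as the tail of an exponential series, and then read off decay in $m$. The only subtlety comes from the factors $\sqrt{2m+2}$ and $\sqrt{2m+3}$ in $I_1$ and $I_3$, which could in principle defeat a naive bound, but they are easily absorbed once we extract the sharp $1/m$ rate from $e^x - 1 \sim x$.

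More concretely, for $I_1$ I would first apply the Gamma-recurrence inequality
\[
\frac{\Gamma(m+\tfrac{3}{2})}{\Gamma(m+k+\tfrac{3}{2})}
= \prod_{j=1}^{k} \frac{1}{m+j+\tfrac{1}{2}}
\leq \frac{1}{(m+\tfrac{3}{2})^{k}},
\]
so that
\[
|I_1(m,r_0)| \leq \sqrt{2m+3}\, r_0^{-3/2}\,
\sum_{k=1}^{\infty} \frac{1}{k!}\left(\frac{r_0^{2}/4}{m+\tfrac{3}{2}}\right)^{\!k}
= \sqrt{2m+3}\, r_0^{-3/2}\bigl(e^{r_0^2/(4m+6)} - 1\bigr).
\]
Since $e^{x}-1 \le x e^{x}$ for $x\ge 0$, the right-hand side is $O(\sqrt{m}\cdot m^{-1}) = O(m^{-1/2})\to 0$ as $m\to\infty$. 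Alternatively, one recognizes the inner sum via the power series \eqref{eq:bessel_series} as $\Gamma(\nu+1)(r_0/2)^{-\nu}J_\nu(r_0)-1$ with $\nu=m+\tfrac12$; both viewpoints yield the same bound. The same argument, with $\nu=m$, handles $I_3$.

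For $I_2$ the extra ingredient is the convolution sum. I would split off the coefficient
\[
S_k(m) = \sum_{k_1+k_2=k} \frac{1}{k_1!\,k_2!}\,
\frac{\Gamma^{2}(m+\tfrac{3}{2})}{\Gamma(m+k_1+\tfrac{3}{2})\,\Gamma(m+k_2+\tfrac{3}{2})},
\]
apply the same Gamma-ratio bound factor-by-factor to obtain
$\Gamma(m+\tfrac{3}{2})/\Gamma(m+k_i+\tfrac{3}{2}) \le (m+\tfrac{3}{2})^{-k_i}$, and then use the binomial identity $\sum_{k_1+k_2=k}\tfrac{1}{k_1!k_2!} = 2^k/k!$ to conclude $S_k(m) \le 2^k / (k!(m+\tfrac{3}{2})^k)$. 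The prefactor $2m+3$ in $I_2$ is neutralized by the denominator $2m+2k+3 \ge 2m+3$, so the whole sum is dominated by $e^{r_0^2/(2m+3)}-1 \to 0$. The argument for $I_4$ is identical with the shift $m+\tfrac{3}{2} \leadsto m+1$.

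The only conceptual obstacle I anticipate is making sure that the apparently dangerous amplifying factors $\sqrt{2m+2}$, $\sqrt{2m+3}$, and $(2m+3)$ are genuinely offset; the clean way to see this is to keep the exponential bound $e^x - 1$ in closed form rather than expanding it, because then the sharp rate $x = O(1/m)$ is manifest. Everything else is routine manipulation of the series, and treating $I_3, I_4$ separately is unnecessary since they are structurally identical to $I_1, I_2$.
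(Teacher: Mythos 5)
Your proposal is correct and follows essentially the same strategy as the paper: bound the Gamma ratios by a geometric factor $(m+\mathrm{const})^{-k}$, recognize the series as $e^{x}-1$ with $x = O(1/m)$, and use $e^x - 1 \le x e^x$ to absorb the $\sqrt{2m+3}$ (or $2m+3$) amplification. The paper routes through the Bessel power series identity to express the sum in $I_1$ as $\Gamma(\nu+1)(r_0/2)^{-\nu}J_\nu(r_0)-1$ before bounding, and uses the slightly weaker denominator $\nu = m+\tfrac12$ instead of your $m+\tfrac32$; neither difference is substantive.
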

	
	\begin{proof}[Proof of Theorem \ref{thm:main_result}]
		The proof proceeds in two steps.
		
		\medskip
		\textbf{Step 1: Construction of the transmission eigenfunction and its properties.}
		
		Consider the interior transmission problem (see \cite{ColtonKress}, p.~319):
		\begin{equation}\label{Interior Transmission Problem}
			\left\{\begin{array}{@{}ll@{}}
				(\Delta +  \mathfrak{n}^2) w = 0 & \text{in } \displaystyle\bigcup_{i=1}^{n} B_{r_0}(\mathbf{y}_i), \\
				(\Delta + 1) v = 0 & \text{in } \displaystyle\bigcup_{i=1}^{n} B_{r_0}(\mathbf{y}_i), \\
				w = v,\ \dfrac{\partial w}{\partial \nu} = \dfrac{\partial v}{\partial \nu} & \text{on } \partial\left(\displaystyle\bigcup_{i=1}^{n} B_{r_0}(\mathbf{y}_i)\right),
			\end{array}\right.
		\end{equation}
		where $\nu$ denotes the outward unit normal. A pair $(w,v)$ satisfying \eqref{Interior Transmission Problem} is called a transmission eigenfunction corresponding to the transmission eigenvalue $\omega$. We analyze explicit solutions in both two-dimensional  and three-dimensional case, showing that $v(\mathbf{x})$ attains its maximum simultaneously on each $\partial B_{r_0}(\mathbf{y}_i)$ ($i=1,\dots,n$) with arbitrarily large magnitude for large predetermined parameter $m$.
		
		\medskip
		\item[\textbf{Case 1}: $d=3.$] For a fixed positive integer $m$ to be determined, in polar coordinates with $\mathbf{x} \in B_{r_0}(\mathbf{y}_i)$ ($i=1,\dots,n$):
		\begin{equation*}
			w(\mathbf{x}) = \alpha_{m}^{l} j_m( \mathfrak{n} r_i) Y_{m}^{l}(\theta_i, \varphi_i), \quad
			v(\mathbf{x}) = \beta_{m}^{l} j_m( r_i) Y_{m}^{l}(\theta_i, \varphi_i), \quad -m \leqslant l \leqslant m,
		\end{equation*}
		where $r_i = |\mathbf{x}-\mathbf{y}_i|$, $\theta_i = \arccos((x_3-y_{i3})/r_i)$, $\varphi_i = \arctan((x_2-y_{i2})/(x_1-y_{i1}))$. To satisfy the boundary conditions on $\partial B_{r_0}(\mathbf{y}_i)$, we set $\alpha^l_m = [j_m( r_0)/j_m( \mathfrak{n} r_0)] \beta^l_m$ with $\mathfrak{n}$ solving
		\begin{equation*}
			\mathfrak{n} j_{m}^\prime( \mathfrak{n} r_0) j_{m}( r_0) - j_{m}^\prime(r_0) j_{m}(\mathfrak{n} r_0) = 0.
		\end{equation*}
		Normalizing $v$ such that $\|v\|_{L^2(B_{r_0}(\mathbf{y}_i))} = 1$ for all $i$, we obtain
		\begin{equation*}
			\beta_m^l = \sqrt{\frac{2}{\pi}} \frac{1}{\sqrt{\int_0^{ r_0} J^2_{m+\frac{1}{2}}(r) r  dr}}.
		\end{equation*}
		Combining these results yields
		\begin{equation}\label{v in d=3}
			v(\mathbf{x}) = \frac{r_i^{-\frac{1}{2}} J_{m+\frac{1}{2}}( r_i)}{\sqrt{\int_0^{ r_0} J_{m+\frac{1}{2}}^2(r) r  dr}} Y_m^l(\theta_i, \varphi_i), \quad \mathbf{x} \in B_{r_0}(\mathbf{y}_i).
		\end{equation}
		By using recurrence relation for Bessel function, we have
		\begin{equation*}
			\left(\frac{J_{m+\frac{1}{2}}( r)}{r^{\frac{1}{2}}}\right)^{\prime}=\frac{m J_{m+\frac{1}{2}}( r)- r J_{m+\frac{3}{2}}( r) }{r^{\frac{3}{2}}}.
		\end{equation*}
		Let $m> r_0$ and use the fact that $J_{m+\frac{1}{2}}(r)>J_{m+\frac{3}{2}}(r)~ ,\forall r \in (0,m]$, then $J_{m+\frac{1}{2}}( r) r^{-\frac{1}{2}}$ is increasing on $\left(0, r_0\right]$. Consequently, by \eqref{eq:bessel_series}, we calculate
		\begin{equation}\label{max d=3}
			\begin{aligned}
				&\max _{r_i \in\left(0, r_0\right]}\frac{ {r_i^{-\frac{1}{2}}} J_{m+\frac{1}{2}}( r_i)}{\sqrt{\int_0^{ r_0} J_{m+\frac{1}{2}}^2(r) r d r}} =\frac{{r_0^{-\frac{1}{2}}} J_{m+\frac{1}{2}}( r_0)}{\sqrt{\int_0^{ r_0} J_{m+\frac{1}{2}}^2(r) r d r}}\\
				&=\frac{\sum_{k=0}^{\infty}\frac{(-1)^{k}r_{0}^{m+2k}}{k!2^{m+2k+\frac{1}{2}}\Gamma(m+k+\frac{3}{2})}}{\sqrt{\sum_{k=0}^{\infty}\sum_{k_1+k_2=k}\frac{1}{k_1{!}k_2{!}\Gamma(m+k_1+\frac{3}{2})\Gamma(m+k_2+\frac{3}{2})}\frac{(-1)^k(r_0)^{2m+2k+3}}{2^{2m+2k+1}2m+2k+3}}}\\
				&=\frac{\sqrt{2m+3}r_0^{-\frac{3}{2}}+\sum_{k=1}^{\infty}\frac{(-1)^k\Gamma(m+\frac{3}{2})\sqrt{2m+3} }{k!\Gamma(m+k+\frac{3}{2})}\left(\frac{r_0}{2}\right)^{2k}r_0^{-\frac{3}{2}}}{\sqrt{1+\sum_{k=1}^{\infty}\sum_{k_1+k_2=k}\left(\frac{\Gamma^2(m+\frac{3}{2})}{k_1!k_2!\Gamma(m+k_1+\frac{3}{2})\Gamma(m+k_2+\frac{3}{2})}\right)\times\frac{(-1)^k( r_0/2)^{2k}(2m+3)}{(2m+2k+3)}}}\\
				&=\frac{\sqrt{2m+3}r_0^{-\frac{3}{2}}+I_1(m,r_0)}{\sqrt{1+I_2(m, r_0)}}.
			\end{aligned}
		\end{equation}
		By Lemma \ref{I_1,I_2,I_3,I_4}, $I_1, I_2 \to 0$ as $m \to \infty$ for fixed $r_0$. Thus, there exists $M_1(r_0)$ such that for predetermined parameter $m \geqslant M_1$,
		\begin{equation}\label{upper bound in d=3}
	 \sqrt{2m+3}  r_0^{-\frac{3}{2}}	\geq	\frac{J_{m+\frac{1}{2}}( r_0)  r_0^{-\frac{1}{2}}}{\sqrt{\int_0^{ r_0} J_{m+\frac{1}{2}}^2(r) r  dr}} > \frac{1}{2} \sqrt{2m+3}  r_0^{-\frac{3}{2}}.
		\end{equation}
		Combining \eqref{eq:spherical_harmonics}, \eqref{v in d=3}, \eqref{upper bound in d=3}, and spherical harmonic properties from Lemma \ref{lem:legendre_bound}, there exist points $\mathbf{y}^*_i \in \partial B_{r_0}(\mathbf{y}_i)$ satisfying
		\begin{equation}\label{3.12}
			|v(\mathbf{y}^*_i)| > \frac{1}{16} \sqrt{2m+3}  r_0^{-\frac{3}{2}}.
		\end{equation}
		
		\medskip
		\item[\textbf{Case 2}: $d=2.$] The two-dimensional case follows analogously. For $\mathbf{x} \in B_{r_0}(\mathbf{y}_i)$ ($i=1,\dots,n$):
		\begin{equation*}
			w(\mathbf{x}) = \alpha_m J_m( \mathfrak{n} r_i) e^{\mathrm{i}m \theta_i}, \quad
			v(\mathbf{x}) = \beta_m J_m( r_i) e^{\mathrm{i}m \theta_i},
		\end{equation*}
		with $r_i = |\mathbf{x}-\mathbf{y}_i|$, $\theta_i = \arctan((x_2-y_{i2})/(x_1-y_{i1}))$. Setting $\alpha_m = [J_m( r_0)/J_m( \mathfrak{n} r_0)] \beta_m$ and $	\beta _m=\frac{1}{\sqrt{2\pi}}\frac{1}{\sqrt{ \int_{0}^{ r_0}J^2_m(r)rdr}}$  yields 
		\begin{equation}\label{d=2}
			v(\mathbf{x}) = \frac{1}{\sqrt{2\pi}} \frac{ J_m( r_i)}{\sqrt{\int_{0}^{ r_0} J_{m}^2(r) r  dr}} e^{\mathrm{i}m \theta_i}.
		\end{equation}
		For $m > r_0$, by \eqref{eq:bessel_series}, we obtain
		\begin{equation}\label{max d=2}
			\max_{r_i \in [0,r_0]} \frac{ J_m(r_i)}{\sqrt{\int_{0}^{ r_0} J_{m}^2(r) r  dr}} = \frac{\sqrt{2m+2} r^{-1}_0 + I_3(m,r_0)}{\sqrt{1 + I_4(m,r_0)}}.
		\end{equation}
		By Lemma \ref{I_1,I_2,I_3,I_4}, $I_3, I_4 \to 0$ as $m \to \infty$ for fixed $r_0$. Then, for any predetermined $m > M_2(r_0)$, there exist points $\mathbf{y}^{*}_{i} \in \partial B{r_0}(\mathbf{y}_i)$ such that
		\begin{equation}\label{3.14}
			|v(\mathbf{y}^*_i)| \geqslant \frac{\sqrt{2m+2} r^{-1}_0}{2\sqrt{2\pi}}.
		\end{equation}
		Defining $M = \max\{M_1(r_0), M_2(r_0), \lfloor r_0 \rfloor + 1\}$, both \eqref{3.12} and \eqref{3.14} hold for predetermined parameter $m \geqslant M$.

		\medskip
		\textbf{Step 2: Herglotz approximation and construction of $u_0$.}
		
	Since $D$ is a bounded Lipschitz domain, it admits arbitrarily close smooth outer approximations; see \cite{Antonini}. In particular, there exists a $C^\infty$-smooth bounded domain $D_{r_0/4}$ such that
		\begin{equation*}
			D \Subset D_{r_0/4}
			\quad \text{and} \quad
			\sup_{x \in \partial D_{r_0/4}} \operatorname{dist}(x,D) < \frac{r_0}{4}.
		\end{equation*}
		As established in Theorem \ref{thm:main_result}, the sets \(D_{r_0/4}\) and \(B_{r_0}(\mathbf{y}_i)\) are disjoint for all \(i\). Then we define $\mathcal{O} = D_{r_0/4} \cup \bigcup_{i=1}^{n} B_{r_0}(\mathbf{y}_i)$. Consider the interior transmission problem
		\begin{equation}\label{Interior transmission problem 2}
			\left\{\begin{array}{@{}ll@{}}
				(\Delta +  \mathfrak{n}^2) w_1 = 0 & \text{in } \mathcal{O}, \\
				(\Delta + 1) v_1 = 0 & \text{in } \mathcal{O}, \\
				w_1 = v_1,\ \dfrac{\partial w_1}{\partial \nu} = \dfrac{\partial v_1}{\partial \nu} & \text{on } \partial \mathcal{O}.
			\end{array}\right.
		\end{equation}
		The pair $(w_1, v_1) = \chi_{\bigcup_{i=1}^n B_{r_0}(\mathbf{y}_i)} (w,v) + \chi_{D_{r_0/4}} (0,0)$ satisfies \eqref{Interior transmission problem 2} by construction. Setting $\omega=1$ and applying Lemma \ref{Herglotz lemma} to $v_1=\chi_{\bigcup_{i=1}^n B_{r_0}(\mathbf{y}_i)} v + \chi_{D_{r_0/4}} \cdot 0$, there exists $g \in L^2(\mathbb{S}^{d-1})$ such that the Herglotz function
		\begin{equation}\label{eq Hg}
			H_g(\mathbf{x}) = \int_{\mathbb{S}^{d-1}} g(\theta) \exp(\mathrm{i} \mathbf{x} \cdot \theta)  d\theta
		\end{equation}
		satisfies
		\begin{equation}\label{eq3.14}
			\left\| H_g - \left( \chi_{\bigcup_{i=1}^n B_{r_0}(\mathbf{y}_i)} v + \chi_{D_{r_0/2}} \cdot 0 \right) \right\|_{H^5(\mathcal{O})} < \varepsilon.
		\end{equation}
		This implies
		\begin{equation}\label{important estimate for H_g}
			\| H_g - v \|_{H^5(B_{r_0}(\mathbf{y}_i))} < \varepsilon \quad (i=1,\dots,n) \quad \text{and} \quad \| H_g \|_{H^5(D)} < \varepsilon.
		\end{equation}
		Define
		\begin{equation}\label{u_0}
			u_0(\mathbf{x},t) = H_g(\mathbf{x}) \exp(-\mathrm{i}  t).
		\end{equation}
	Since $H_g(\mathrm{x})$ satisfies $(\Delta+1)H_g(\mathrm{x})=0$ in $\mathbb{R}^d$, a straightforward calculation shows that the function $u_0$ defined above satisfies 
		\[
		\mathrm{i}\partial_t u_0 + \Delta u_0 = 0 \quad \text{in } \mathbb{R}^d \times [0,+\infty),
		\] 
		which implies that Properties 1 and 2 hold. Furthermore, by the Sobolev embedding theorem and the estimate \eqref{important estimate for H_g}, there exist constants $C_1$ (depending on $D$ and $d$) and $C_2$ (depending on $r_0$, $D$, and $d$) such that
		\begin{equation}\label{3.18}
			\| H_g \|_{C^{1,\frac{1}{2}}(\overline{D})} \leqslant C_1 \varepsilon,
		\end{equation}
		and
		\begin{equation}\label{3.17}
			\| H_g - v \|_{L^{\infty}(B_{r_0}(\mathbf{y}_i))} \leqslant C_2	\| H_g - v \|_{H^5(B_{r_0}(\mathbf{y}_i))}\leqslant C_2 \varepsilon \quad (i=1,\dots,n).
		\end{equation}	
		Combining \eqref{3.12}, \eqref{3.14}, \eqref{u_0}, \eqref{3.18}, and \eqref{3.17}, together with the continuity of $H_g$ and $v$, we conclude that Properties 3 and 4 are verified. 
	
		The proof is complete.
	\end{proof}
	
	\begin{remark}
		The approximation of $\chi_{\bigcup_{i=1}^n B_{r_0}(\mathbf{y}_i)} v + \chi_{D_{r_0/2}} \cdot 0$ in the $H^5$ norm required in \eqref{eq3.14} is valid due to the $C^\infty$ regularity of $\partial D_{r_0/2}$. This construction serves a critical purpose: the resulting estimate will be employed in the subsequent section to separately control the source term $F_1$ in equation~\eqref{remaining term},  the source term $F_2$ in equation~\eqref{nonlinear hyperbolic equation with source term and zero initial condition}, and all spatial derivatives $\partial_{\mathbf{x}}^{\alpha} u_0$ for $|\alpha|\leqslant 3$. Detailed justification for this approach is provided in \eqref{estimate on F_1} and Lemma~\ref{Lemma 5.1}.
	\end{remark}
	
	The initial and boundary data for problem \eqref{1.1} are formally specified as:
	\begin{equation}\label{initial and boundary condition}
		\begin{aligned}
			& \phi(\mathbf{x}) = \left. u_0(\mathbf{x},t) \right|_{\Omega \times \{ t=0 \}} = H_{g}(\mathbf{x})|_{\Omega}, \\
			& \psi(\mathbf{x},t) = u_0 \left. (\mathbf{x},t) \right|_{\partial \Omega \times [0,T]} = \exp(-\mathrm{i} t)H_g(\mathbf{x})|_{\partial \Omega \times [0,T]}.
		\end{aligned}
	\end{equation}
	Similarly, the initial data for problem \eqref{1.2} are defined as:
	\begin{equation}\label{initial condition}
	  \Phi(\mathbf{x}) = \left. u_0(\mathbf{x},t) \right|_{\mathbb{R}^d \times \{ t=0 \}} = H_{g}(\mathbf{x})|_{\mathbb{R}^d}, 
	\end{equation}
	
	\begin{remark}
		By Remark \eqref{remark 2.3} and the assumption that $\partial \Omega$ is Lipschitz, we have $\phi \in C^{\infty}(\Omega)$, $\psi \in C^\infty([0, T]; C(\partial \Omega))$, and $\Phi \in C^{\infty}(\mathbb{R}^d)$.
	\end{remark}

	\begin{remark}\label{remark 3.8}
		It is essential to recognize that the constructions in \eqref{initial and boundary condition} and \eqref{initial condition} remain abstract at this stage. The function $u_0$--and consequently all associated data--depends critically on the yet-to-be-determined parameters $\varepsilon$, $m$, $r_0$, and the configuration of points $\mathbf{x}_1, \dots, \mathbf{x}_n$ on $\partial D$. A concrete realization of $u_0$ requires explicitly specifying these parameters, which are intrinsically determined by the prescribed large constant $\mathcal{M}$ and the spatial locations of the points $\mathbf{x}_1, \dots, \mathbf{x}_n$. In the next two sections, we will strategically   choose  $\varepsilon$, $m$, and $r_0$ in terms of $\mathcal{M}$ and the given boundary point configuration, ultimately yielding a well-defined $u_0$ and corresponding initial and boundary inputs suitable for rigorous analysis.
	\end{remark}
	
	\section{Proof of Theorem \ref{1.1}}\label{sec:4}
	
	In this section, we present the complete proof of Theorem~\ref{thm:main}. Our strategy involves analyzing an auxiliary problem \eqref{remaining term} with zero initial-boundary conditions and a source term, derived from the linear Schr\"odinger equation \eqref{1.1}. We establish well-posedness for this  auxiliary problem via the standard Galerkin method and demonstrate that its solution exhibits small H\" older norm on $\Omega \setminus \overline{D}$. Finally, by integrating Theorems \ref{thm:main_result} and \ref{3.1} and a delicate balancing of the parameters $\varepsilon$, $r_0$, and $m$, we precisely generate the localized gradient amplification behavior central to our main result.
	
	Let $\mathcal{U} : \Omega \times [0,T] \rightarrow \mathbb{C}$ satisfy 
	\begin{equation}\label{remaining term}
		\begin{cases}
			\mathrm{i}\partial_t \mathcal{U}+\mathcal{L}_1\mathcal{U} = F_1 & \text{in } \Omega \times (0,T], \\
			\mathcal{U}(\mathbf{x},0) = 0 & \text{in } \Omega, \\
			\mathcal{U}(\mathbf{x},t) = 0 & \text{on } \partial\Omega \times [0,T],
		\end{cases}
	\end{equation}
	where the source term
	\begin{equation}\label{F_1}
		F_1(\mathbf{x}, t)=\nabla \cdot\left((\mathbf{I}-\mathscr{A}_1) \nabla u_0\right).
	\end{equation}
Recalling that $\mathscr{A}_1 - \mathbf{I}$ has compact support in $D$, we obtain
\begin{equation*}
	\operatorname{supp} F_1(\cdot,t) \subset D, \quad \forall t \in [0,T].
\end{equation*}
	Furthermore, combining \eqref{important estimate for H_g}, \eqref{u_0}, \eqref{F_1}, and the assumption $\mathscr{A}_1 \in W^{2,\infty}(\Omega)$, we obtain the following estimate for $F_1$:
	\begin{equation}\label{estimate on F_1,}
		 \| F_1 \|_{L^\infty(0,T;H^1(\Omega))} \leqslant C \varepsilon,
	\end{equation}
	and
	\begin{equation}\label{estimate on F_1}
		\| F_1 \|_{H^2(0,T;L^2(\Omega))} + \| F_1 \|_{H^1(0,T;H^1(\Omega))} 
		\leqslant C \|H_g\|_{H^3(D)} \leqslant C\varepsilon.
	\end{equation}
	Here, the spatial derivatives of \( u_0 \) appearing in \( F_1 \) are of order at most two, whereas the space--time Sobolev norm estimate requires third-order spatial derivatives of \( H_g \) due to the presence of spatial gradients in the \( H^1(\Omega) \) norm. Hence, the \( H^3 \) norm of \( H_g \) suffices to control all terms in the estimate. The constant $C$ depends on $d$, $T$, $D$, and $\|\mathscr{A}_1\|_{W^{2,\infty}(\Omega)}$.

	Next, we obtain the well-posedness for auxiliary problem $\eqref{remaining term}$ by energy estimates and Galerkin method. 
	\begin{theorem}\label{3.1}
		For any $T$ with $0<T<\infty$, problem \eqref{remaining term} admits a unique solution $\mathcal{U}$ satisfying
		$$ 	\mathcal{U}\in L^{\infty}\left(0,T;H^{3}_{\mathrm{loc}}(\Omega)\right)$$ 
		with the following properties:
		\begin{enumerate}
			\item Interior improved H\"older regularity on $\Omega:$ for any  Lipschtiz domain $\Omega^\prime \Subset \Omega$,
			\begin{equation*}
				\mathcal{U} \in L^{\infty}(0,T;C^{1,\frac{1}{2}}(\overline{\Omega}^{\prime})). 
			\end{equation*} 
			\item  Smallness estimate:
			\begin{equation*}
				\|\mathcal{U}\|_{L^{\infty}(0,T;C^{1,\frac{1}{2}}(\overline{\Omega}^{\prime})) }\leqslant C_3 \varepsilon ,
			\end{equation*}
			where the constant $C_3$ depends on where the constant $C_3$ depends on $d$, $T$, $\theta$, $D$, $\Omega^\prime$, $\Omega$, $\|\mathscr{A}_1\|_{W^{2,\infty}(\Omega)}$,$\|\mathscr{A}_1\|_{C^{1,1}(\overline{\Omega})}$.
		\end{enumerate}
	\end{theorem}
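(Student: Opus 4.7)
My plan is to first construct $\mathcal{U}$ via the Schr\"odinger semigroup associated with the Hermitian, uniformly elliptic operator $L_1$, and then to bootstrap from energy estimates to local $H^3$ regularity by combining a time-differentiated equation with interior elliptic regularity in the spatial variable. Concretely, I would set $A := -\nabla\!\cdot\!(\mathbf{A}_1\nabla)$ on the domain $H^2(\Omega)\cap H^1_0(\Omega)$, which is self-adjoint and positive by the Hermitian symmetry and uniform ellipticity of $\mathbf{A}_1$. Stone's theorem then furnishes a unitary group $e^{-\mathrm{i} tA}$ on $L^2(\Omega)$, and Duhamel's formula $\mathcal{U}(t) = -\mathrm{i}\int_0^t e^{-\mathrm{i}(t-s)A} F_1(s)\,ds$ produces the unique weak solution in $C([0,T];L^2(\Omega))$ with $\mathcal{U}|_{\partial\Omega}=0$. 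Equivalently, one could use a Galerkin discretization in the eigenbasis of $A$ with the energy identity obtained by multiplying by $\overline{\mathcal{U}}$ and taking the imaginary part. Unitarity immediately gives $\|\mathcal{U}\|_{L^\infty(0,T;L^2)} \leq T\|F_1\|_{L^\infty(0,T;L^2)} \leq C\varepsilon$.

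\textbf{Step 2: time-differentiated $H^1$ estimate.} Next I would differentiate \eqref{remaining term} in time and set $\mathcal{V} := \partial_t \mathcal{U}$, which satisfies $\mathrm{i}\partial_t \mathcal{V} + L_1 \mathcal{V} = \partial_t F_1$ together with the compatibility condition $\mathcal{V}(0) = -\mathrm{i} F_1(0)$ obtained by evaluating the original equation at $t=0$ and using $\mathcal{U}(0) = 0$. Because $\operatorname{supp} F_1 \subset D \Subset \Omega$, $\mathcal{V}(0)$ lies in $H^1_0(\Omega)$ with $\|\mathcal{V}(0)\|_{H^1}\leq C\varepsilon$ by \eqref{estimate on F_1,}. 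Since the $H^1_0$-norm is equivalent to $\|A^{1/2}\cdot\|_{L^2}$ and is preserved by $e^{-\mathrm{i} tA}$, applying Duhamel to $\mathcal{V}$ and using
\begin{equation*}
\|\partial_t F_1\|_{L^1(0,T;H^1)} \leq T^{1/2}\|F_1\|_{H^1(0,T;H^1)} \leq C\varepsilon
\end{equation*}
will yield $\|\partial_t \mathcal{U}\|_{L^\infty(0,T;H^1)} \leq C\varepsilon$.

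\textbf{Step 3: interior elliptic regularity and Sobolev embedding.} Rewriting \eqref{remaining term} as $L_1 \mathcal{U}(\cdot,t) = F_1(\cdot,t) - \mathrm{i}\partial_t \mathcal{U}(\cdot,t)$, the right-hand side then lies in $L^\infty(0,T; H^1(\Omega))$ with norm $\leq C\varepsilon$. Because $\mathbf{A}_1 \in C^{1,1}(\overline{\Omega})$, I would invoke standard interior elliptic regularity for divergence-form operators pointwise in $t$ to obtain, for any Lipschitz $\Omega' \Subset \Omega$,
\begin{equation*}
\|\mathcal{U}(\cdot,t)\|_{H^3(\Omega')} \leq C_{\Omega'}\bigl(\|F_1 - \mathrm{i}\partial_t\mathcal{U}\|_{H^1(\Omega)} + \|\mathcal{U}\|_{L^2(\Omega)}\bigr) \leq C\varepsilon,
\end{equation*}
so that $\mathcal{U} \in L^\infty(0,T;H^3_{\mathrm{loc}}(\Omega))$. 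Sobolev embedding then delivers $H^3(\Omega') \hookrightarrow C^{1,\frac12}(\overline{\Omega'})$: directly in $d=2$, and in $d=3$ via the chain $H^3 \hookrightarrow W^{2,6} \hookrightarrow C^{1,\frac12}$. The constant $C_3$ thereby acquires the expected dependence on $d, T, \theta, D, \Omega', \Omega$ and on $\|\mathbf{A}_1\|_{W^{2,\infty}(\Omega)}$, $\|\mathbf{A}_1\|_{C^{1,1}(\overline{\Omega})}$.

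\textbf{Where I expect the main difficulty.} The delicate point is Step 2: securing smallness of $\mathcal{V}=\partial_t\mathcal{U}$ in $H^1$, not merely in $L^2$, is what makes the subsequent elliptic bootstrap to $H^3_{\mathrm{loc}}$, and ultimately to $C^{1,\frac12}$, possible. Two structural ingredients have to line up for this to work: the compact support of $F_1$ inside $D \Subset \Omega$, which places $\mathcal{V}(0) = -\mathrm{i} F_1(0)$ in $H^1_0$ and thus preserves the Dirichlet condition under the unitary evolution, and the $H^5$-approximation of $H_g$ established in Theorem \ref{thm:main_result}, which is precisely the source of the $H^1$-smallness of both $F_1$ and $\partial_t F_1$ recorded in \eqref{estimate on F_1,}--\eqref{estimate on F_1}. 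Without both, one would recover only $H^2_{\mathrm{loc}}$ regularity, which fails to embed into $C^{1,\frac12}$ in the physically interesting case $d=3$, and the whole construction underlying Theorem \ref{thm:main} would collapse.
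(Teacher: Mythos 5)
Your proposal is correct and follows essentially the same route as the paper: differentiate the equation in time, use the compactly supported initial datum $\mathcal{V}(0)=-\mathrm{i}F_1(0)\in H^1_0(\Omega)$ to secure an $H^1$ bound of size $C\varepsilon$ on $\partial_t\mathcal{U}$, rewrite $L_1\mathcal{U}=F_1-\mathrm{i}\partial_t\mathcal{U}$ and invoke interior elliptic regularity to reach $H^3_{\mathrm{loc}}$, then conclude by Sobolev embedding. The only (cosmetic) deviation is that you derive existence and the a priori estimates through Stone's theorem and Duhamel with the unitary group $e^{-\mathrm{i}tA}$, whereas the paper uses Galerkin plus the energy multipliers $\overline{\mathcal{U}}$ and $\overline{\partial_t\mathcal{U}}$ with Gr\"onwall; for this self-adjoint linear problem the two mechanisms yield the same bounds and are interchangeable.
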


	\begin{proof}
		\textbf{Step 1: a priori estimates for problem \eqref{remaining term}.}
		
		Introduce the notation
		\[
		A[\mathcal{U},\mathcal{V};t]
		:=\int_\Omega \sum_{i,j=1}^d a_1^{ij}(\cdot)\, 
		\mathcal{U}_{x_i}\, \overline{\mathcal{V}_{x_j}}.
		\]
		Multiplying \eqref{remaining term} by $\overline{\mathcal{U}}$, using integration by parts, we obtain
		\begin{equation}\label{eq4.4}
			\mathrm{i}\int_{\Omega}\partial_{t}\mathcal{U}\, \overline{\mathcal{U}}\,dx
			-\int_{\Omega}a^{ij}_{1}(x)\partial_{i}\mathcal{U}\,
			\overline{\partial_{j}\mathcal{U}}\,dx
			=\int_{\Omega}F_1\,\overline{\mathcal{U}}\,dx .
		\end{equation}
		
		Taking the complex conjugate of \eqref{eq4.4}, subtracting it from \eqref{eq4.4}, taking the imaginary part, and applying the Cauchy--Schwarz inequality yields:
		\[
		\frac{d}{dt}\|\mathcal{U}(t)\|_{L^{2}(\Omega)}^{2}
		\le C\Bigl(\|\mathcal{U}(t)\|_{L^{2}}^{2}
		+\|F_{1}(t)\|_{L^{2}}^{2}\Bigr).
		\]
		By Grönwall’s inequality,
		\begin{equation}\label{eq 4.55}
			\|\mathcal{U}(t)\|_{L^{2}(\Omega)}^{2}
			\le C\int_{0}^{t}\|F_1(s)\|_{L^{2}(\Omega)}^{2}\,ds ,
		\end{equation}
		for all $0\le t\le T$.
	
		Multiplying \eqref{remaining term} by $\overline{\partial_t\mathcal{U}}$, and integrating by parts, we obtain
		\begin{equation}\label{eq4.5}
			\mathrm{i}\int_{\Omega}|\partial_{t}\mathcal{U}|^{2}\,dx
			-\int_{\Omega}a^{ij}_{1}(x)\partial_{i}\mathcal{U}\,
			\partial_{t}\overline{\partial_{j}\mathcal{U}}\,dx
			=\int_{\Omega}F_1\,\overline{\partial_t\mathcal{U}}\,dx.
		\end{equation}
		Note that
		\[
		\int_{\Omega}\overline{
			a^{ij}_1(x)\partial_i\mathcal{U}\,
			\partial_{t}\overline{\partial_{j}\mathcal{U}}}\,dx
		=
		\int_{\Omega}a^{ji}_{1}(x)\,\overline{\partial_{i}\mathcal{U}}\,
		\partial_{t}\partial_{j}\mathcal{U}\,dx
		=
		\int_{\Omega}a^{ij}_{1}(x)\,
		\partial_{t}\partial_{i}\mathcal{U}\,
		\overline{\partial_{j}\mathcal{U}}\,dx .
		\]
		Taking the complex conjugate of \eqref{eq4.5}, adding it back, and taking the real part gives
		\[
		\frac{d}{dt}A[\mathcal{U},\mathcal{U};t]
		= -2\Re\int_{\Omega}F_1(t)\,
		\overline{\partial_t\mathcal{U}(t)}\,dx .
		\]
		Integrating over $[0,t]$, using integration by parts in time, the identity $\mathcal{U}(0)=0$, and \eqref{eq 4.55}, we calculate
		\[
		\begin{aligned}
			A[\mathcal{U},\mathcal{U};t]
			&= A[\mathcal{U},\mathcal{U};0]
			-2\Re\int_{0}^{t}\int_{\Omega}
			F_1(x,s)\,\overline{\partial_s\mathcal{U}(x,s)}\,dx\,ds 
			\\
			&= -2\Re\int_{\Omega}F_1(x,t)\overline{\mathcal{U}(x,t)}\,dx
			+2\Re\int_{0}^{t}\int_{\Omega}
			\partial_sF_1(x,s)\,\overline{\mathcal{U}(x,s)}\,dx\,ds
			\\
			&\le C\left(\|F_1(t)\|_{L^{2}}^{2}
			+\|\mathcal{U}(t)\|_{L^{2}}^{2}
			+\int_{0}^{t}\|F_1^{\prime}(s)\|_{L^{2}}^{2}
			+\|\mathcal{U}(s)\|_{L^{2}}^{2}\,ds \right)
			\\
			&\le C\|F_1\|_{W^{1,2}(0,T;L^{2}(\Omega))}^{2}.
		\end{aligned}
		\]
		Thus,
		\begin{equation}\label{eq 4.8}
			\|\mathcal{U}(t)\|_{H^{1}_{0}(\Omega)}
			\le C\,\|F_1\|_{W^{1,2}(0,T;L^{2}(\Omega))},
		\end{equation}
		for all $t\in[0,T]$.
		
		Next, set $\mathcal{U}_{1}:=\partial_t\mathcal{U}$. Then $\mathcal{U}_{1}$ satisfies
		\begin{equation}\label{eq4.8}
			\begin{cases}
				\mathrm{i}\partial_t\mathcal{U}_{1}+\mathcal{L}_1\mathcal{U}_{1}=F_1', &\text{in }\Omega\times(0,T],\\[2mm]
				\mathcal{U}_{1}(x,0)=-\mathrm{i}F_{1}(x,0), &\text{in }\Omega,\\[2mm]
				\mathcal{U}_{1}(x,t)=0, &\text{on }\partial\Omega\times[0,T].
			\end{cases}
		\end{equation}
		Since $F_1$ has compact support, $\mathcal{U}_1(x,0)|_{\partial\Omega}
		= -\mathrm{i}F_1(x,0)|_{\partial\Omega}=0$, which is compatible with the boundary condition. Repeating the same procedure as above, multiplying \eqref{eq4.8} by $\overline{\mathcal{U}_1}$, integrating by parts, we obtain
		\begin{equation}\label{eq4.9}
			\mathrm{i}\int_{\Omega}\partial_{t}\mathcal{U}_1\,
			\overline{\mathcal{U}_1}\,dx
			-\int_{\Omega}a^{ij}_{1}(x)\partial_{i}\mathcal{U}_1\,
			\overline{\partial_{j}\mathcal{U}_1}\,dx
			=\int_{\Omega}F_1'\,\overline{\mathcal{U}_1}\,dx .
		\end{equation}
		Taking the conjugate of \eqref{eq4.9}, subtracting, and taking imaginary parts again yields
		\[
		\frac{d}{dt}\|\mathcal{U}_1(t)\|_{L^{2}}^{2}
		\le C\Bigl(\|\mathcal{U}_1(t)\|_{L^{2}}^{2}
		+\|F_1'(t)\|_{L^{2}}^{2}\Bigr),
		\]
		and by Grönwall's inequality,
		\begin{equation}\label{eq 4.10}
			\|\mathcal{U}_1(t)\|_{L^{2}}^{2}
			\le C\int_{0}^{t}
			\|F_1'(s)\|_{L^{2}}^{2}\,ds ,
		\end{equation}
		for all $0\le t\le T$. Multiplying \eqref{eq4.8} by $\overline{\partial_t\mathcal{U}_1}$, integrating by parts, we obtain
		\begin{equation*}\label{eq4.11}
			\mathrm{i}\int_{\Omega}|\partial_{t}\mathcal{U}_1|^{2}\,dx
			-\int_{\Omega}a^{ij}_{1}(x)\partial_{i}\mathcal{U}_1\,
			\partial_{t}\overline{\partial_{j}\mathcal{U}_1}\,dx
			=\int_{\Omega}F_1'\,\overline{\partial_t\mathcal{U}_1}\,dx .
		\end{equation*}
		Proceeding as before, we derive
		\[
		\frac{d}{dt}A[\mathcal{U}_1,\mathcal{U}_1;t]
		= -2\Re\int_{\Omega}F_1'(t)\,
		\overline{\partial_t \mathcal{U}_1(t)}\,dx .
		\]
		Integrating over $[0,t]$, integrating by parts in $t$, and using
		$\mathcal{U}_1(0)=-\mathrm{i}F_1(x,0)$ together with \eqref{eq 4.10}, we obtain
		\begin{equation}\label{eq 4.12}
			\begin{aligned}
				A[\mathcal{U}_1,\mathcal{U}_1;t]
				&=A[\mathcal{U}_1,\mathcal{U}_1;0]
				-2\Re\int_{0}^{t}\int_{\Omega}
				F_1'(x,s)\,\overline{\partial_s\mathcal{U}_1(x,s)}\,dx\,ds 
				\\
				&=A[\mathcal{U}_1,\mathcal{U}_1;0]
				-2\Re\int_{\Omega}
				F_1'(x,t)\overline{\mathcal{U}_1(x,t)}
				-F_1'(x,0)\overline{\mathcal{U}_1(x,0)}\,dx
				\\
				&\quad
				+2\Re\int_{0}^{t}\int_{\Omega}
				F_1''(x,s)\,\overline{\mathcal{U}_1(x,s)}\,dx\,ds
				\\
				&\le C\Bigl(\|\nabla F_1(0)\|_{L^{2}}^{2}
				+\|F_1'(t)\|_{L^{2}}^{2}
				+\|F_1'(0)\|_{L^{2}}^{2}
				+\|\mathcal{U}_1(t)\|_{L^{2}}^{2}
				+\|\mathcal{U}_1(0)\|_{L^{2}}^{2}
				\\
				&\quad
				+\int_{0}^{t}\|F_1''(s)\|_{L^{2}}^{2}
				+\|\mathcal{U}_1(s)\|_{L^{2}}^{2}\,ds\Bigr)
				\\
				&\le C\Bigl(
				\|F_1\|_{H^{1}(0,T;H^{1}(\Omega))}^{2}
				+\|F_1\|_{H^{2}(0,T;L^{2}(\Omega))}^{2}
				\Bigr).
			\end{aligned}
		\end{equation}
		Combining \eqref{eq 4.10} and \eqref{eq 4.12}, and using the coercivity
		$A[\mathcal{U}_1,\mathcal{U}_1;t]\ge \theta\|\nabla \mathcal{U}_1(t)\|_{L^{2}}^{2}$, we deduce
		\begin{equation}\label{eq 4.14}
			\|\mathcal{U}_1(t)\|_{H^{1}(\Omega)}^{2}
			\le C\Bigl(\|F_1\|_{H^{1}(0,T;H^{1}(\Omega))}^{2}
			+\|F_1\|_{H^{2}(0,T;L^{2}(\Omega))}^{2}\Bigr),
		\end{equation}
		for all $0\le t\le T$.
	
		\textbf{Step 2: existence, uniqueness and Hölder regularity for problem \eqref{remaining term}.}
		
		Using the Galerkin method and the a priori estimate \eqref{eq 4.8}, we obtain a weak solution
		\[
		\mathcal{U}\in L^{\infty}(0,T;H^{1}_{0}(\Omega))
		\]
		to \eqref{remaining term}.  
		Uniqueness follows from Grönwall’s inequality. By \eqref{estimate on F_1} and \eqref{eq 4.14}, we also have
		\[
		\partial_t \mathcal{U} \in L^{\infty}(0,T;H_0^{1}(\Omega))
		\]
		and
		\[
		\|\partial_t \mathcal{U}\|_{L^{\infty}(0,T;H_0^{1}(\Omega))}
		\le C\left(
		\|F_1\|_{H^{1}(0,T;H^{1}(\Omega))}
		+\|F_1\|_{H^{2}(0,T;L^{2}(\Omega))}
		\right)
		\le C\varepsilon .
		\]
		Then, by the above estimate and \eqref{estimate on F_1,}, the solution
		$\mathcal{U}(t)\in H^{1}(\Omega)$ satisfies
		\[
		\mathcal{L}_1\mathcal{U}(t)
		=
		F_1(t)-\mathrm{i}\,\partial_t \mathcal{U}(t)
		\in H^{1}(\Omega)
		\]
		in the weak sense for almost every $t\in[0,T]$.
		Since $\mathscr{A}_1\in C^{1,1}(\overline{\Omega})$, interior elliptic
		regularity (Theorem~8.10 in~\cite{GilbargTrudinger}) implies that, for any
		$\Omega'\Subset\Omega\setminus\overline{D}$,
		\[
		\mathcal{U}(t)\in H^{3}(\Omega'),
		\qquad
		\|\mathcal{U}(t)\|_{H^{3}(\Omega')}
		\le C\Bigl(
		\|\partial_t\mathcal{U}(t)\|_{H^{1}(\Omega)}
		+\|F_1(t)\|_{H^{1}(\Omega)}
		\Bigr)
		\le C\varepsilon ,
		\]
		for almost every $t\in[0,T]$.
		Taking the supremum over $t\in[0,T]$ yields
		\[
		\|\mathcal{U}\|_{L^{\infty}(0,T;H^{3}(\Omega'))}
		\le C\varepsilon .
		\]
		Finally, by the Sobolev embedding theorem, properties~(1) and~(2) follow.

		The proof is complete.
	\end{proof}

	We now proceed to establish Theorem \ref{thm:main}.
	\begin{proof}[Proof of Theorem \ref{thm:main}.]
		In the three-dimensional case, according to Remark \ref{remark 3.8}, we select predetermined parameters $r_0$, $\varepsilon$, and $m$ as follows:
		\begin{equation}\label{4.18}
			\begin{aligned}
				&\hfill r_0 = \min\left\{\,
				\frac{1}{3}(\mathcal{M} + 1)^{-1},\ 
				\frac{1}{6}\min_{1 \le i < j \le n} |\mathbf{x}_i - \mathbf{x}_j|,\ 
				\frac{1}{3}\operatorname{dist}(D, \partial \Omega)
				\,\right\}, \\
				&\hfill \varepsilon = \min\left\{\frac{1}{C_1 + C_2(r_0) + C_3},1\right\}, \\
				&\hfill m = \max\left\{ \left\lfloor 512 r_0^3 \right\rfloor + 1,  M(r_0) \right\},
			\end{aligned}
		\end{equation}
		where the constants $C_1$, $C_2(r_0)$, and $M(r_0)$ are specified in Theorem \ref{thm:main_result}, and $C_3$ is given by Theorem \ref{3.1}. Then, we choose the initial data $\phi$ and boundary data $\psi$ for problem \eqref{1.1} as specified in \eqref{initial and boundary condition}, where the corresponding $u_0$ is determined by the parameters $r_0$, $\varepsilon$, and $m$ in \eqref{4.18}. Let $\mathcal{U}$ denote the unique solution to auxiliary problem \eqref{remaining term}. A direct calculation verifies that 
		\begin{equation}\label{eq 4.17}
			u := u_0 + \mathcal{U}
		\end{equation}	
		constitutes the unique solution to problem \eqref{1.1}. By properties 2 of Theorem \ref{remaining term} and the smoothness of $u_0$, it follows that $u \in L^\infty(0,T;C^{1,\frac{1}{2}}(\overline{\Omega^\prime}))$ for any $\Omega^\prime \Subset \Omega$ with Lipschitz boundary. Applying the mean value theorem to \eqref{u0 smalles} and \eqref{3 d} in Theorem \ref{thm:main_result} yields points $\mathbf{z}_i \in B_{3r_0}(\mathbf{x}_i) \backslash \overline{D}$ ($i=1,2,\dots,n$) such that for all $t \in [0,T]$:
		\begin{equation}\label{4.22}
			|\nabla u_0(\mathbf{z}_i,t)| \geqslant \frac{
				\tfrac{1}{16}\sqrt{2m+3} \, r_0^{-3/2} - \left(C_1 + C_2(r_0)\right)\varepsilon
			}{3r_0}.
		\end{equation}
		Combining \eqref{4.18}, \eqref{eq 4.17}, \eqref{4.22}, and properties (2) of Theorem \ref{3.1}, we obtain
		\begin{equation}\label{large gradient of u}
			\begin{aligned}
				|\nabla u(\mathbf{z}_i,t)| &\geqslant \frac{
					\tfrac{1}{16}\sqrt{2m+3} \, r_0^{-3/2} - \left(C_1 + C_2(r_0)\right)\varepsilon
				}{3r_0} - C_3\varepsilon\\
				& >\frac{
					\tfrac{1}{16}\sqrt{1024r_0^3} \, r_0^{-3/2} - 1
				}{3r_0} - 1=\frac{1}{3r_0}-1>\mathcal{M},
			\end{aligned}
		\end{equation}
		for a.e. $t \in [0,T]$. Here we choose $m \geqslant \left\lfloor 512 r_0^3 \right\rfloor + 1 $ in \eqref{4.18} to guarantee $\tfrac{1}{16}\sqrt{2m+3} \, r_0^{-3/2}\geqslant 2$. On the other hand, using prpoerties (3) in Theorem \ref{thm:main_result} and above estimate, we calculate 
		\begin{equation}\label{quotient of graident u outside D and inside D}
		\frac{|\nabla u(\mathbf{z}_i,t)|}{||u(\cdot,t)||_{C^{1,\frac{1}{2}}(\overline{D})}}\geqslant \frac{\mathcal{M}}{||u_0(\cdot,t)||_{C^{1,\frac{1}{2}}(\overline{D})}+||\mathcal{U}(\cdot,t)||_{C^{1,\frac{1}{2}}(\overline{D})}}\geqslant \frac{\mathcal{M}}{(C_1+C_3)\varepsilon} \geqslant \mathcal{M}>\frac{\mathcal{M}}{2}.
		\end{equation}
		By \eqref{large gradient of u} and \eqref{quotient of graident u outside D and inside D}, properties (1) and (2)  in Theorem \ref{thm:main} are verified. Furthermore, under the above construction, the localized gradient amplification points are confined within the balls $B_{3r_0}(\mathbf{x}_i),i=1,2,...,n,$ which implies that 
		\begin{equation*}
			\operatorname{meas}\big\{ \mathbf{x} \in \bigcup_{i=1}^{n} B_{3r_0}(\mathbf{x}_i)\backslash \overline{D} :|\nabla u(\mathbf{x},t)|  > \mathcal{M} \big\} < 36n\pi r_0^3 < \frac{100n}{(\mathcal{M} + 1)^3}  \to 0
		\end{equation*}
		as $\mathcal{M} \to \infty$, for a.e. $t \in [0,T]$. Properties (3) in Theorem \ref{thm:main} is verified. The two-dimensional case follows analogously.
		
		The proof is complete.
	\end{proof}
	
	\section{Proof of Theorem \ref{thm:main2}}\label{sec:5}

	In this section, we apply identical methodology to the nonlinear Schr\"odinger equation \eqref{1.2}, proving Theorem~\ref{thm:main2}. Building upon the linear framework, we consider a nonlinear auxiliary problem \eqref{nonlinear hyperbolic equation with source term and zero initial condition} with zero initial conditions, derived from the problem \eqref{1.2}. Using a fixed-point argument in a suitably chosen Banach space, we establish existence and uniqueness of equation \eqref{nonlinear hyperbolic equation with source term and zero initial condition} while demonstrating the solution's H\" older norm remains small. Strategic parameter tuning of $\varepsilon$, $r_0$, and $m$ completes the proof.
	
	Let $\mathscr{U}:\mathbb{R}^d\times [0,T]\rightarrow \mathbb{C}$ satisfy
	\begin{equation}\label{nonlinear hyperbolic equation with source term and zero initial condition}
		\left\{
		\begin{array}{@{}l@{\quad}l}
			\mathrm{i}\partial_t \mathscr{U}+\hat{\mathcal{L}}_2 \mathscr{U} + \hat{N}(\mathscr{U}) = F_2 & \text{in } \mathbb{R}^d \times (0,T], \\
			\mathscr{U}(\mathbf{x},0) = 0 & \text{in } \mathbb{R}^{d},
		\end{array}
		\right.
	\end{equation}
	where the following conditions hold:
\begin{enumerate}
	\item
	\[
	\hat{\mathcal{L}}_2\mathscr{U} := \nabla\cdot(\mathscr{A}_2\nabla \mathscr{U}) + \hat{c}\,\mathscr{U}.
	\]
	
	\item
	\[
	\hat{c}(\mathbf{x},t) = c(\mathbf{x},t) + \sum_{k=2}^{l_0} k\,\alpha_k(\mathbf{x},t)\,u_{0}^{\,k-1}(\mathbf{x},t).
	\]
	
	\item The nonlinear term is given by
	\begin{equation}\label{eq:hatN}
		\hat{N}(\mathscr{U})
		= \sum_{k=2}^{l_0} \left(\alpha_k \sum_{i=2}^k \binom{k}{i} u_{0}^{\,k-i} \mathscr{U}^i \right).
	\end{equation}
	
	\item The source term is given by
	\begin{equation}\label{eq:F2}
		F_2(\mathbf{x},t)
		= \nabla\cdot\big(\mathbf{I}-\mathscr{A}_2\big)\nabla u_0
		- c\,u_0 - \sum_{k=2}^{l_0}\alpha_k u_0^k.
	\end{equation}
\end{enumerate}

	Next we give some properties for the coefficients, nonlinear term and source term in \eqref{nonlinear hyperbolic equation with source term and zero initial condition} which will be used in the proof of Theorem \ref{thm:5.2}.
	\begin{lemma}\label{Lemma 5.1}
		The coefficients, nonlinear term and source term in equation \eqref{nonlinear hyperbolic equation with source term and zero initial condition} satisfy the following properties:
		\begin{enumerate}
			\item $\operatorname{supp}\hat{c}(\cdot,t)$, $\operatorname{supp}\hat{N}(\mathscr{U})(\cdot,t)$
			and $\operatorname{supp}F_2(\cdot,t)\subset D$ for all $t\in[0,T]$.
			
			\item For a.e.\ $t\in[0,T]$,
			\[
			\|\hat{c}(\cdot,t)\|_{W^{2,\infty}(D)}\le C,\qquad
			\|F_2(\cdot,t)\|_{H^{3}(D)} \le C\,\varepsilon,
			\]
			where the constant $C$ depends only on $d$, $l_0$, $D$, $\|\mathscr{A}_2\|_{L^\infty(0,T)}$,
			$\|c\|_{L^\infty(0,T;W^{3,\infty}(D))}$, and $\|\alpha_k\|_{L^\infty(0,T;W^{3,\infty}(D))}$.
		\end{enumerate}
	\end{lemma}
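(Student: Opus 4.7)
The plan is to establish the two properties separately, treating the support claim as a direct consequence of the compact-support assumptions on the given coefficients, and proving the norm estimates by a term-by-term analysis that combines the Sobolev smallness of $H_g$ from Theorem~\ref{thm:main_result} (estimate \eqref{important estimate for H_g}) with the Sobolev embedding $H^{5}(D)\hookrightarrow W^{2,\infty}(D)$ (valid for $d\in\{2,3\}$ since $5-d/2>2$) and standard product/algebra estimates in $H^{3}$.

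For Property~1, I would simply inspect each summand. Every term in the definition of $\hat c$ carries a factor of either $c(\mathbf x,t)$ or $\alpha_k(\mathbf x,t)$; every term in $\hat N(\mathscr U)$ given by \eqref{eq:hatN} carries an $\alpha_k$ factor; and in $F_2$ the leading divergence term carries the factor $\mathbf I-\mathbf A_2$, which vanishes outside $\overline D$ because $\mathbf A_2(\mathbf x,t)=\mathbf I$ there. The assumptions on $c$ and $\alpha_k$ (compact spatial support in $D$) together with the structure of $\mathbf A_2$ then give all three support statements.

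For the $W^{2,\infty}$ bound on $\hat c$, I would first observe that $\|c(\cdot,t)\|_{W^{2,\infty}(D)}\le\|c\|_{L^\infty(0,T;W^{3,\infty}(D))}$, and then control each $\alpha_k u_0^{k-1}$ term as follows: on $D$ we have $u_0(\mathbf x,t)=H_g(\mathbf x)e^{-\mathrm i t}$, so by the Sobolev embedding and \eqref{important estimate for H_g},
\[
\|u_0(\cdot,t)\|_{W^{2,\infty}(D)}\le C\|H_g\|_{H^{5}(D)}\le C\varepsilon<C.
\]
Since $W^{2,\infty}(D)$ is a Banach algebra, this gives $\|u_0^{k-1}(\cdot,t)\|_{W^{2,\infty}(D)}\le C_k$. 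Multiplying by the uniformly bounded $\|\alpha_k\|_{W^{2,\infty}(D)}$ and summing over $k=2,\dots,l_0$ yields the stated bound.

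For the $H^{3}$ bound on $F_2$, the main point is to extract a single factor of $\varepsilon$ from each piece of \eqref{eq:F2}. On $D$, the matrix $\mathbf A_2(\mathbf x,t)=\mathbf B(t)$ is spatially constant, so the first term reduces to $(\mathbf I-\mathbf B(t))_{ij}\,\partial_i\partial_j u_0$, whose $H^{3}(D)$-norm is controlled by $\|\mathbf B\|_{L^\infty(0,T)}\|u_0\|_{H^{5}(D)}\le C\varepsilon$ using \eqref{important estimate for H_g}; here the jump of $\mathbf A_2$ across $\partial D$ is irrelevant because we measure the norm on $D$. For $c\,u_0$, the $W^{3,\infty}$ bound on $c$ combined with $\|u_0\|_{H^{3}(D)}<\varepsilon$ and the Leibniz rule gives $\|c u_0\|_{H^{3}(D)}\le C\varepsilon$. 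The cubic-and-higher terms $\alpha_k u_0^{k}$ are the only mildly delicate step: using the algebra estimate $\|fg\|_{H^{3}(D)}\le C(\|f\|_{H^{3}(D)}\|g\|_{W^{2,\infty}(D)}+\|f\|_{W^{2,\infty}(D)}\|g\|_{H^{3}(D)})$ inductively, together with the uniform bound $\|u_0\|_{W^{2,\infty}(D)}\le C\varepsilon\le C$, one obtains $\|u_0^{k}\|_{H^{3}(D)}\le C\|u_0\|_{W^{2,\infty}(D)}^{\,k-1}\|u_0\|_{H^{3}(D)}\le C\varepsilon^{k}\le C\varepsilon$, where the final step uses $\varepsilon<1$. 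Multiplying by $\|\alpha_k\|_{W^{3,\infty}(D)}$ and summing finishes the estimate. The only step requiring care is this last one—making sure the algebra inequality is applied so that exactly one factor of $\|u_0\|_{H^{3}(D)}$, rather than $\|u_0\|_{H^{3}(D)}^{k}$, appears in the bound; this is where the smoothness-vs-smallness split provided by the embedding $H^{5}(D)\hookrightarrow W^{2,\infty}(D)$ is essential.
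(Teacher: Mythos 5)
Your argument is correct and follows essentially the same route as the paper's: Property~1 by inspecting the compactly supported factors $\mathbf I-\mathbf A_2$, $c$, $\alpha_k$, and Property~2 by combining the $H^5(D)$ smallness \eqref{important estimate for H_g} of $H_g$ with Sobolev embedding and product estimates, using $\mathbf A_2\equiv\mathbf B(t)$ on $D$ to handle the divergence term. The one place you overstate the subtlety is the final paragraph: the mixed $H^3$/$W^{2,\infty}$ algebra inequality you describe as ``essential'' for the $\alpha_k u_0^k$ terms is actually unnecessary. Since $\varepsilon<1$, the plain Banach-algebra estimate in $H^3(D)$ already gives $\|u_0^k\|_{H^3(D)}\le C\|u_0\|_{H^3(D)}^k\le C\varepsilon^k\le C\varepsilon$, which is precisely what the paper does; getting a power $\varepsilon^k$ rather than a single factor $\varepsilon$ costs nothing. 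Your version works too, but the smoothness-vs-smallness split is an optimization, not a requirement.
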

	
	\begin{proof}
		Properties 1 follows immediately from the compact support in $D$ of $\mathbf{I} - \mathscr{A}_2(\cdot, t)$, $c(\cdot, t)$, $\alpha_k(\cdot, t)$ for $t \in [0, T]$, $k = 2, \dots, l_0$ combined with the expressions defining $\hat{c}$, $\hat{N}(\mathscr{U})$ and $F_2$.
		
		For properties 2, note that $\alpha_k$ are smooth, so all their derivatives are bounded in $D$. It remains to show $\partial_{\mathbf{x}}^{\alpha} u_0 \in L^{\infty}(D)$ for all multi-indices $\alpha$ with $|\alpha| \leqslant 3$. By the Sobolev embedding theorem, \eqref{important estimate for H_g}, and \eqref{u_0}, we have
		\begin{equation}\label{5.2}
			\|\partial_{\mathbf{x}}^{\alpha} u_0\|_{L^{\infty}(D)} \leqslant C \|\partial_{\mathbf{x}}^{\alpha} u_0\|_{H^{2}(D)} \leqslant C \|H_g\|_{H^5(D)} \leqslant C \varepsilon \leqslant C, \quad \forall |\alpha| \leqslant 3, \quad \forall t \in [0, T],
		\end{equation}
		where the final inequality holds since $\varepsilon \leqslant 1$ (See Theorem \ref{thm:main_result} and Remark \ref{cf 3.5}). Consequently,
		\begin{equation*}
			\|\hat{c}(\cdot, t)\|_{W^{2,\infty}(D)}\leqslant  C \left( \|c_2(\cdot, t)\|_{W^{2,\infty}(D)} + \sum_{k=2}^{l_0} \|u_0\|_{W^{2,\infty}(D)}^{k-1} \right) \leqslant C \left( 1 + \sum_{k=1}^{l_0-1} \varepsilon^{k} \right) \leqslant C,
		\end{equation*}
		for a.e. $t \in [0, T]$.
		
		For the source term, using the Banach algebra property of $H^3(D)$ and
	\eqref{important estimate for H_g}, \eqref{eq:F2}, and since $\varepsilon\le1$,
	we obtain
	\begin{equation}\label{eq:F2_est}
		\|F_2(\cdot,t)\|_{H^3(D)} \le C\sum_{k=1}^{l_0}\|H_g\|_{H^5(D)}^k
		\le C\sum_{k=1}^{l_0}\varepsilon^k \le C\varepsilon,
	\end{equation}
	for a.e.\ $t\in[0,T]$.
	\end{proof}
	
	We now establish the well-posedness of problem \eqref{nonlinear hyperbolic equation with source term and zero initial condition} via Banach's fixed point theorem, combined with an estimate for the nonlinear term $F_2$. This approach yields an explicit characterization of the solution's existence time $T$ in terms of the parameter $\varepsilon$. We show that $T$ admits a uniform positive lower bound independent of $\varepsilon$ and, moreover, becomes arbitrarily large for sufficiently small $\varepsilon$. Furthermore, we prove that the Hölder norm of the solution is bounded by $C \sqrt{\varepsilon}$ for some constant $C > 0$.
	
	\begin{theorem}\label{thm:5.2}
		There exists a time $ T > 0 $ $($given below by \eqref{eq:T_lifespan}$)$, depending on $ \varepsilon $, $ d $, $ l_0 $, $ D $, $ \|\mathscr{A}_2\|_{L^{\infty}(0,T)} $, $ \| c\|_{L^{\infty}(0,T; W^{3,\infty}(D))} $, and $\|\alpha_k\|_{L^{\infty}(0,T; W^{3,\infty}(D))} $, such that problem \eqref{nonlinear hyperbolic equation with source term and zero initial condition} admits a unique solution $ \mathscr{U} $ satisfying:
		$$
		\mathscr{U} \in L^{\infty}(0, T; H^3(\mathbb{R}^d)) 
		$$
		with the following properties:
		\begin{enumerate}
			\item Uniform bounds for $T$: 
			$$
			T\geqslant T_{\mathrm{lower}} 	,
			$$
			where $T_{\mathrm{lower}}$ is given by \eqref{5.17}  and independent of $\varepsilon$.
			\item  H\"older regularity:
			$$
			\mathscr{U} \in L^{\infty}\big([0,T]; C^{1,\frac{1}{2}}(\mathbb{R}^d)\big).
			$$
			\item Smallness estimate:
			$$
			\|\mathscr{U}\|_{L^{\infty}(0,T;C^{1,\frac{1}{2}}(\mathbb{R}^d)) }\leqslant C_6 \sqrt{\varepsilon},
			$$
			where the constant $C_6$ depends only on $d$.
		\end{enumerate}
	\end{theorem}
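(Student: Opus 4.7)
The plan is to set up a Banach contraction mapping in a closed ball of
$X_T := L^{\infty}(0,T;H^{3}(\mathbb R^d))$ of radius $R$, eventually chosen as $R = C_6\sqrt{\varepsilon}$. The scaling $R\sim\sqrt\varepsilon$ is forced by balancing the source $F_2$, which is of size $\varepsilon$ by Lemma~\ref{Lemma 5.1}(2), against the quadratic-leading nonlinearity $\hat N$, whose $H^3$-norm is of order $R^2$; absorbing $\hat N(\mathscr V)$ into $F_2$ then explains the loss of a square-root in the final smallness bound.

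Concretely, for each $\mathscr V\in B_R:=\{\mathscr V\in X_T:\|\mathscr V\|_{X_T}\le R\}$, I would define $\Phi(\mathscr V):=\mathscr U$ where $\mathscr U$ is the unique solution on $\mathbb R^d\times(0,T]$ of the linear Schr\"odinger problem
\[
\mathrm i \partial_t \mathscr U + \hat L_2 \mathscr U = F_2 - \hat N(\mathscr V),\qquad \mathscr U(\cdot,0)=0.
\]
Its well-posedness and the energy estimate
$\|\Phi(\mathscr V)\|_{X_T}\le \beta(T)\bigl(\|F_2\|_{X_T}+\|\hat N(\mathscr V)\|_{X_T}\bigr)$, with $\beta$ continuous and $\beta(0)=0$, follow by an argument patterned on Theorem~\ref{3.1}: test by $\overline{\mathscr U}$ and $\overline{\partial_t \mathscr U}$ to obtain $L^2$ and $H^1$ bounds using the Hermitian symmetry of $\mathbf A_2$, the $W^{2,\infty}$ bound on $\hat c$ from Lemma~\ref{Lemma 5.1}(2), and Gr\"onwall; then, because the spatial jump of $\mathbf A_2$ across $\partial D$ forbids spatial differentiation of the equation, differentiate in time only to control $\partial_t\mathscr U$ in $H^1$, and view the resulting elliptic identity $\nabla\cdot(\mathbf A_2\nabla\mathscr U)=-\hat c\mathscr U-\mathrm i\partial_t\mathscr U+F_2-\hat N(\mathscr V)$ at fixed $t$ to bootstrap spatial $H^3$-regularity separately on $D$, where $\mathbf A_2=\mathbf B(t)$ is constant in space, and on $\mathbb R^d\setminus\overline D$, where $\mathbf A_2=\mathbf I$. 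Existence follows from a Galerkin scheme, uniqueness from Gr\"onwall.

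Next I would use the Banach-algebra property of $H^3(\mathbb R^d)$ in dimensions $d\le 3$, the smoothness of $u_0$ and each $\alpha_k$, and the bound $\|H_g\|_{H^5(D)}<\varepsilon\le 1$ from \eqref{important estimate for H_g} to derive the polynomial nonlinear estimates
\[
\|\hat N(\mathscr V)\|_{X_T}\le C\,\|\mathscr V\|_{X_T}^{2}\bigl(1+\|\mathscr V\|_{X_T}^{l_0-2}\bigr),\qquad \|\hat N(\mathscr V_1)-\hat N(\mathscr V_2)\|_{X_T}\le C\bigl(R+R^{l_0-1}\bigr)\|\mathscr V_1-\mathscr V_2\|_{X_T}.
\]
Combined with $\|F_2\|_{X_T}\le C\varepsilon$ from Lemma~\ref{Lemma 5.1}, the self-mapping inequality $\beta(T)\bigl(C\varepsilon+CR^{2}(1+R^{l_0-2})\bigr)\le R$ and the contraction inequality $\beta(T)C(R+R^{l_0-1})\le 1/2$, both with $R=C_6\sqrt\varepsilon$ and $C_6$ large, reduce to an upper bound on $T$ of the form $T\le T(\varepsilon)$ that \emph{grows} as $\varepsilon$ decreases. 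Restricting to $\varepsilon\in(0,1]$ produces the uniform lower bound $T_{\text{lower}}$ of item~(1), and Banach's fixed-point theorem yields the unique $\mathscr U\in X_T$. Item~(2) and the smallness estimate $\|\mathscr U\|_{L^{\infty}(0,T;C^{1,1/2})}\le C\|\mathscr U\|_{X_T}\le CR=C_6\sqrt\varepsilon$ of item~(3) then follow from the Sobolev embedding $H^3(\mathbb R^d)\hookrightarrow C^{1,1/2}(\mathbb R^d)$, which holds for $d\in\{2,3\}$.

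The main obstacle will be the $H^3$-energy estimate for the linear subproblem in the presence of the spatially discontinuous coefficient $\mathbf A_2$: direct spatial differentiation of the equation generates uncontrolled surface-delta contributions on $\partial D$. The remedy sketched above -- only time-differentiate, then bootstrap spatial regularity piecewise by elliptic estimates in $D$ and in $\mathbb R^d\setminus\overline D$ -- mirrors the strategy of Section~\ref{sec:4} but must now be carried out on all of $\mathbb R^d$ and must also absorb the polynomial contributions from $\hat N(\mathscr V)$. A secondary bookkeeping task is to track explicitly how $\beta(T)$ and the constants depend on $\varepsilon$, so as to verify that $T(\varepsilon)$ indeed admits a positive floor uniformly in $\varepsilon\in(0,1]$; this is precisely what produces the explicit $T_{\text{lower}}$ claimed in item~(1).
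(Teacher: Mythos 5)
The high-level architecture of your proposal (Banach fixed point with radius $R\sim\sqrt{\varepsilon}$, Banach-algebra estimates for $\hat N$, tracking of the $T$-dependence to obtain a uniform lower bound, Sobolev embedding $H^3\hookrightarrow C^{1,1/2}$ for the final conclusion) matches the paper. However, the proposed route to the $H^3$ a priori estimate for the linear subproblem has two genuine gaps, and your main worry is in fact unfounded.

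First, you test against $\overline{\partial_t\mathscr{U}}$ and differentiate the equation in time. This cannot be carried out under the paper's hypotheses: $\mathbf{A}_2$ is assumed only $\mathbf{B}\in L^{\infty}[0,T]$, not $W^{1,\infty}$ in time, so the step $\Re\int a_2^{ij}\partial_i\mathscr{U}\,\overline{\partial_j\partial_t\mathscr{U}}=\tfrac12\tfrac{d}{dt}\int a_2^{ij}\partial_i\mathscr{U}\,\overline{\partial_j\mathscr{U}}-\tfrac12\int(\partial_t a_2^{ij})\partial_i\mathscr{U}\,\overline{\partial_j\mathscr{U}}$ produces an uncontrollable term. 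The paper avoids this entirely by, for each multi-index $|\beta|\le 3$, testing $\partial_{\mathbf{x}}^\beta$ of the equation against $\overline{\partial_{\mathbf{x}}^\beta\mathscr{U}}$ and taking \emph{only the imaginary part}: the second-order contribution $\int a_2^{ij}\partial_i\partial_{\mathbf{x}}^\beta\mathscr{U}\,\overline{\partial_j\partial_{\mathbf{x}}^\beta\mathscr{U}}$ is real by Hermitian symmetry and drops out, and no time derivative of $\mathbf{A}_2$ ever appears. This is precisely what pairs with $\mathbf{B}\in L^{\infty}$.

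Second, your assertion that the spatial jump of $\mathbf{A}_2$ across $\partial D$ "forbids spatial differentiation of the equation" misreads the structure: $\mathbf{A}_2(\cdot,t)$ is \emph{piecewise constant} in $\mathbf{x}$, equal to $\mathbf{B}(t)$ on $\overline{D}$ and $\mathbf{I}$ off it, so $\partial_{\mathbf{x}}^{\gamma}\mathbf{A}_2=0$ a.e.\ for $|\gamma|\ge1$, and the commutator $[\partial_{\mathbf{x}}^\beta,\nabla\cdot(\mathbf{A}_2\nabla\,\cdot)]$ vanishes a.e. The only nontrivial commutator is with the zero-order coefficient $\hat{c}$, which is controlled by $\|\hat{c}(\cdot,t)\|_{W^{3,\infty}(D)}$ (smooth $c$, $\alpha_k$, $u_0$ on $D$). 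Hence the paper can and does apply $\partial_{\mathbf{x}}^\beta$ directly for all $|\beta|\le 3$, sum, and Grönwall, closing the $H^3$ estimate with $|||\mathscr{U}|||\le C_4(T)T\varepsilon$. Your alternative -- bootstrap spatial regularity separately on $D$ and on $\mathbb{R}^d\setminus\overline{D}$ -- would only produce piecewise $H^3$ up to $\partial D$ and not the global $H^3(\mathbb{R}^d)$ that Theorem~\ref{thm:5.2} asserts and that you yourself need for the $H^3$ Banach algebra and the embedding into $C^{1,1/2}(\mathbb{R}^d)$ (and since $D$ is only Lipschitz, boundary elliptic regularity does not even yield $H^3$ up to $\partial D$).

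A secondary structural difference worth noting: you run the contraction in $L^{\infty}(0,T;H^3)$, while the paper defines $X_\varepsilon$ by the $H^3$-bound $|||\cdot|||\le\sqrt{\varepsilon}$ but contracts in the weaker $L^{\infty}(0,T;H^1)$ norm (eqs.~\eqref{eq 5.17}--\eqref{eq 5.19}). Your route is workable, but the paper's choice sidesteps the need for an $H^3$ Lipschitz estimate on $\hat{N}$, using instead the easy $H^1$ difference bound together with the $L^\infty$ control supplied by $\|\mathscr{V}_i\|_{H^3}\le\sqrt{\varepsilon}$.
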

	\begin{proof}The proof proceeds in four steps.
		
		\medskip
		\textbf{Step 1: Setup of fixed-point framework.} 
		
		Define the Banach space
		$$
		X:=\left\{{u} \in L^{\infty}\left(0, T ; H^1\left(\mathbb{R}^d\right)\right) \right\}
		$$
		with norm
		$$
		\|{u}\|:=\underset{0 \leq t \leq T}{\operatorname{ess} \sup } \|{u}(t)\|_{H^1\left(\mathbb{R}^d\right)}.
		$$
		Introduce the stronger norm 
		$$
		|||{u}|||:=\underset{0 \leq t \leq T}{\operatorname{ess} \sup }\|{u}(t)\|_{H^3\left(\mathbb{R}^d\right)},
		$$
		and the closed subset
		$$
		X_\varepsilon:=\left\{{u} \in X \mid\ ||| {u}||| \leq \sqrt{\varepsilon},{u}(0)=0\right\} .
		$$
		For \(\mathscr{V} \in X_\varepsilon\), define \(\mathscr{U} = \mathscr{F}[\mathscr{V}]\) as the solution to the linear initial-value problem
		\begin{equation}\label{linear}
			\left\{
			\begin{aligned}
				&\mathrm{i}\partial_t \mathscr{U}+\hat{\mathcal{L}}_2 \mathscr{U}= F_2 -  \hat{N}(\mathscr{V}) &&\text{on } \mathbb{R}^d \times (0,T], \\
				&\mathscr{U}(\mathbf{x},0) = 0&& \text{in } \mathbb{R}^d.
			\end{aligned}
			\right.
		\end{equation}
		
		\medskip
		\textbf{Step 2: A priori estimate for the problem \eqref{linear}.}
		
		For problem \eqref{linear}, we claim that we have the estimate 
		\begin{equation}\label{key estimate}
			|||\mathscr{U}|||\leqslant C_4(T)T\varepsilon,
		\end{equation}
		where constant \(C_4(T)\) depends continuously on \(T\), \(d\), \(l_0\), \(D\), \(\|\mathscr{A}_2\|_{L^{\infty}(0,T)}\), \(\| c\|_{L^{\infty}(0,T; W^{3,\infty}(D))}\), and \(\|\alpha_k\|_{L^{\infty}(0,T; W^{3,\infty}(D))}\), but not on \(\varepsilon\). Moreover, \(C_4(T)\) is monotonically increasing in \(T\) with \(C_4(0) < \infty\).
		
			For any multi-index $\beta$ with $0\le|\beta|\le 3$, apply $\partial_{\mathbf{x}}^\beta$
		to \eqref{linear} to obtain
		\begin{equation}\label{eq5.6}
			\left\{
			\begin{aligned}
				&\mathrm{i}\partial_t \partial^{\beta}_{\mathrm{x}}\mathscr{U}+\hat{\mathcal{L}}_2 \partial^{\beta}_{\mathrm{x}}\mathscr{U}+[\partial_\mathbf{x}^\beta,\hat{\mathcal{L}}_2]\mathscr{U}= \partial^{\beta}_{\mathrm{x}}F_2 -  \partial^{\beta}_{\mathrm{x}}\hat{N}(\mathscr{V}) &&\text{on } \mathbb{R}^d \times (0,T], \\
				&\partial^{\beta}_{\mathrm{x}}\mathscr{U}(\mathbf{x},0) = 0&& \text{in } \mathbb{R}^d,
			\end{aligned}
			\right.
		\end{equation}
		where $[\partial_\mathbf{x}^\beta,\hat{\mathcal{L}}_2] =\partial_\mathbf{x}^\beta \hat{\mathcal{L}}_2- \hat{\mathcal{L}}_2\partial_\mathbf{x}^\beta $ denotes the commutator, 
		which is a differential operator of order at most 2 in spatial derivatives.  Multiply the first line in \eqref{eq5.6} with $\overline{\partial^{\beta}_{\mathrm{x}}\mathscr{U}}$, use integration by parts one obtains,
		\begin{equation}\label{eq5.7}
			\begin{aligned}
					\mathrm{i}\int_{\mathbb{R}^d}\partial_{t}\partial^{\beta}_{\mathrm{x}}\mathscr{U} \overline{\partial^{\beta}_{\mathrm{x}}\mathscr{U}}d\mathbf{x}-\int_{\mathbb{R}^d}a^{ij}_{2}(t)\partial_{i}\partial^{\beta}_{\mathrm{x}}\mathscr{U}\overline{\partial_{j}\partial^{\beta}_{\mathrm{x}}\mathscr{U}}d\mathbf{x}&+\int_{\mathbb{R}^d}[\partial_\mathbf{x}^\beta,\hat{\mathcal{L}}_2]\mathscr{U}\overline{\partial^{\beta}_{\mathrm{x}}\mathscr{U}}d\mathbf{x}\\
				&=\int_{\mathbb{R}^d}(\partial^{\beta}_{\mathrm{x}}F_2 -  \partial^{\beta}_{\mathrm{x}}\hat{N}(\mathscr{V}))\overline{\partial^{\beta}_{\mathrm{x}}\mathscr{U}}d\mathbf{x} .
			\end{aligned}
		\end{equation}
		Then taking conjugate in \eqref{eq5.7}, subtracting, taking the imaginary part, using integration by parts and Cauchy Schwartz inequality yields 
		\begin{equation}\label{eq5.10}
			\begin{aligned}
					&\quad \frac{d}{dt}||\partial^{\beta}_{\mathrm{x}}\mathscr{U}(t)||^2_{L^{2}(\mathbb{R}^d)}
				\\
		 &\leqslant C\left(|| \partial^{\beta}_{\mathrm{x}}\mathscr{U}(t)||^{2}_{L^{2}(\mathbb{R}^d)}+|| [\partial_\mathbf{x}^\beta,\hat{\mathcal{L}}_2]\mathscr{U}||^{2}_{L^{2}(\mathbb{R}^d)}+||\partial^{\beta}_{\mathrm{x}} F_2(t)||^{2}_{L^{2}(\mathbb{R}^d)}+||\partial^{\beta}_{\mathrm{x}}\hat{N}(\mathscr{V})||^{2}_{L^{2}(\mathbb{R}^d)}\right)\\
			&\leqslant C \left(|| \mathscr{U}(t)||^{2}_{H^{3}(\mathbb{R}^d)}+||F_2(t)||^{2}_{H^{3}(\mathbb{R}^d)}+||\hat{N}(\mathscr{V})||^{2}_{H^{3}(\mathbb{R}^d)}\right)
			\end{aligned}
		\end{equation}
			Summing over $\beta$ with $0\leqslant|\beta| \leqslant 3$  and applying Gronwall's inequality, we obtain that
		\begin{equation}\label{eq5.11}
			||\mathscr{U}(t)||^2_{H^{3}(\mathbb{R}^d)}\leqslant C \int_{0}^{t}|| F_2(s)||^{2}_{H^{3}(\mathbb{R}^d)}+||\hat{N}(\mathscr{V})||^{2}_{H^{3}(\mathbb{R}^d)}ds
		\end{equation}
		for a.e. $ 0\leqslant t\leqslant T$.
		
		Next, we estimate $||\hat{N}(\mathscr{V})||_{H^{3}(\mathbb{R}^d)}$. By \eqref{eq:hatN},
		$\operatorname{supp}\hat{N}(\mathscr{V}(t))\subset D$. Since $\mathscr{V}\in X_{\varepsilon}$, then $||\mathscr{V}(t)||_{H^3(\mathbb{R}^d)}\leqslant \sqrt{\varepsilon}$ for a.e. $t\in[0,T]$. By \eqref{important estimate for H_g} and \eqref{u_0}, we have $||u_0(t)||_{H^3(D)}\leqslant C\varepsilon$ for all $t \in [0,T]$. Combine the above results and $\varepsilon <1$, we calculate
		\begin{equation}\label{eq5.11 estimate for nonlinear term}
			\begin{aligned}
				||\hat{N}(\mathscr{V}(t))||_{H^{3}(\mathbb{R}^d)}&=	||\hat{N}(\mathscr{V}(t))||_{H^{3}(D)}\\
				&\leqslant C\sum_{k=2}^{l_0}\sum_{i=2}^{k} ||\mathscr{V}(t)||^{i}_{H^{3}(D)}||u_0(t)||^{k-i}_{H^{3}(D)}\\
				&\leqslant C \sum_{k=2}^{l_0}\sum_{i=2}^{k} \varepsilon^{\frac{i}{2}}\varepsilon^{k-i}\leqslant C \varepsilon,
			\end{aligned}
		\end{equation} 
		for a.e. $t \in [0,T]$. Combining estimates \eqref{eq:F2_est}, \eqref{eq5.11}, and \eqref{eq5.11 estimate for nonlinear term}, we obtain the desired bound \eqref{key estimate}. By the Hahn–Banach theorem, together with the preceding estimates (see, for example, Theorem 3.2 in \cite{Sogge}), we conclude that the linear problem \eqref{linear} admits a unique solution $\mathscr{U}\in L^{\infty}(0,T; H^{3}(\mathbb{R}^d)).$

		\medskip
		\textbf{Step 3: Contraction mapping argument and well-posedness of problem \eqref{nonlinear hyperbolic equation with source term and zero initial condition}.}
		
		By properly choosing $T$, we show that 
		\begin{equation}\label{eq 5.13}
			\mathscr{F}: X_\varepsilon \to X_\varepsilon
		\end{equation}
		and 
		\begin{equation}\label{eq5.12}
			\|\mathscr{F}[\mathscr{V}_1]-\mathscr{F}[\mathscr{V}_2]\|\leqslant \frac{1}{2}\|\mathscr{V}_1-\mathscr{V}_2\|, \ \forall \mathscr{V}_1,\mathscr{V}_2 \in X_\varepsilon. 
		\end{equation}
	To ensure that \eqref{eq 5.13} holds, we select \(T\) such that, according to \eqref{key estimate}, 
	\[
	|||\mathscr{F}(\mathscr{V})||| = |||\mathscr{U}||| \leqslant C_4(T)T\varepsilon \leqslant \sqrt{\varepsilon}
	\quad \text{for all } \mathscr{V} \in X_{\varepsilon}.
	\]
	This condition is equivalent to
	\begin{equation}\label{eq:5.13}
		C_4(T) T \leqslant \frac{1}{\sqrt{\varepsilon}}.
	\end{equation}
		For \eqref{eq5.12}, let \(\mathscr{U}_1 = \mathscr{F}[\mathscr{V}_1]\), \(\mathscr{U}_2 = \mathscr{F}[\mathscr{V}_2]\), and \(\mathscr{W} = \mathscr{U}_1 - \mathscr{U}_2\). Then \(\mathscr{W}\) satisfies
		\begin{equation}\label{eq5.16}
			\left\{
			\begin{aligned}
				&\mathrm{i}\partial_t \mathscr{W}+\hat{\mathcal{L}}_2\mathscr{W}=  -\big(\hat{N}(\mathscr{V}_1)-\hat{N}(\mathscr{V}_2)\big) && \text{in } \mathbb{R}^d \times (0,T], \\
				&\mathscr{W}(\mathbf{x},0) = 0&& \text{in } \mathbb{R}^d.
			\end{aligned}
			\right.
		\end{equation} 
		Repeating the procedure to obtain estimate \eqref{eq5.10}, for problem \eqref{eq5.16}, we have, for any multiindex $\gamma$ with $0\leqslant|\gamma|\leqslant 1$,
		\begin{equation}\label{eq 5.17}
				\begin{aligned}
				&\quad\frac{d}{dt}||\partial^{\gamma}_{\mathrm{x}}\mathscr{W}(t)||^2_{L^{2}(\mathbb{R}^d)} 
				\\&\leqslant C\left(|| \partial^{\gamma}_{\mathrm{x}}\mathscr{W}(t)||^{2}_{L^{2}(\mathbb{R}^d)}+|| [\partial_\mathbf{x}^\gamma,\hat{\mathcal{L}}_2]\mathscr{W}(t)||^{2}_{L^{2}(\mathbb{R}^d)}+||\partial^{\gamma}_{\mathrm{x}}(\hat{N}(\mathscr{V}_1)-\hat{N}(\mathscr{V}_2))||^{2}_{L^{2}(\mathbb{R}^d)}\right)\\
				&\leqslant C \left(|| \mathscr{U}(t)||^{2}_{H^{1}(\mathbb{R}^d)}+||\partial^{\gamma}_{\mathrm{x}}(\hat{N}(\mathscr{V}_1)-\hat{N}(\mathscr{V}_2))||^{2}_{L^{2}(\mathbb{R}^d)}\right),
			\end{aligned}
		\end{equation}
			where $[\partial_\mathbf{x}^\gamma,\hat{\mathcal{L}}_2] =\partial_\mathbf{x}^\gamma \hat{\mathcal{L}}_2- \hat{\mathcal{L}}_2\partial_\mathbf{x}^\gamma $ denotes the commutator, 
		which is a differential operator of order 0 in spatial derivatives. By using \eqref{eq:hatN}, the fact that $\mathscr{V}_1,\mathscr{V}_2 \in X_{\varepsilon}$ implies $\|\mathscr{V}_i\|_{W^{1,\infty}(\mathbb{R}^d)} \leqslant C\sqrt{\varepsilon}$ for $i=1,2$ and Lemma \ref{Lemma 5.1}, we calculate the term $||\partial^{\gamma}_{\mathrm{x}}(\hat{N}(\mathscr{V}_1)-\hat{N}(\mathscr{V}_2))||^{2}_{L^{2}(\mathbb{R}^d)}$ as follows, 
		\begin{equation}\label{eq 5.18}
			\begin{aligned}
			&\quad\int_{\mathbb{R}^d}|\partial^{\gamma}_{\mathrm{x}}(\hat{N}(\mathscr{V}_1)-\hat{N}(\mathscr{V}_2))|^2d\mathrm{x}\\
			&\leqslant\int_{\mathbb{R}^d}2|\mathscr{V}_1-\mathscr{V}_2|^2|\partial^{\gamma}_{\mathbf{x}}\big(\sum_{k=2}^{l_0}\alpha_k\sum_{i=2}^{k}\binom{k}{i}u_0^{k-i}\sum_{j=0}^{i-1}\mathscr{V}^{i-1-j}_1\mathscr{V}^j_2\big)|^2\\
			& \quad +2|\partial^{\gamma}_{\mathbf{x}}\mathscr{V}_1-\partial^{\gamma}_{\mathbf{x}}\mathscr{V}_2|^2|\sum_{k=2}^{l_0}\alpha_k\sum_{i=2}^{k}\binom{k}{i}u_0^{k-i}\sum_{j=0}^{i-1}\mathscr{V}^{i-1-j}_1\mathscr{V}^j_2|^2d\mathbf{x}\\
			&\leqslant C ||\mathscr{V}_1-\mathscr{V}_2||^2_{H^{1}(D)}\sum_{k=2}^{l_0}\sum_{i=2}^{k}\varepsilon^{2k-2i}\varepsilon^{i-1}\leqslant C \varepsilon ||\mathscr{V}_1-\mathscr{V}_2||^2_{H^{1}(\mathbb{R}^d)}.
			\end{aligned}
		\end{equation}
		Using \eqref{eq 5.18} and summing over $\gamma$ with $0\leqslant|\gamma|\leqslant 1$ to \eqref{eq 5.17}, then applying Gronwall's inequality and using the definition of \(\|\cdot\|\) yields
		\begin{equation}\label{eq 5.19}
			||\mathscr{W}||^{2}\leqslant C_{5}(T)\varepsilon \int_{0}^{T}||\mathscr{V}_1(t)-\mathscr{V}_2(t)||^2_{H^{1}(\mathbb{R}^d)}dt \leqslant \varepsilon C_{5}(T)T ||\mathscr{V}_1-\mathscr{V}_2||^2
		\end{equation}
		where \(C_5(T)\) has the same dependencies as \(C_4(T)\) and is monotonically increasing in \(T\) with \(C_5(0) < \infty\). 	To ensure \(\|\mathscr{W}\|=\| \mathscr{F}[\mathscr{V}_1]-\mathscr{F}[\mathscr{V}_2]\| \leqslant \frac{1}{2} \|\mathscr{V}_1 - \mathscr{V}_2\|\), we require by \eqref{eq 5.19}, 
		\begin{equation}\label{eq C_T(5)}
			C_5(T) T \leqslant \frac{1}{4{\varepsilon}}.
		\end{equation}
		Set the lifespan time
	\begin{equation}\label{eq:T_lifespan}
		T_{\mathrm{lifespan}} := \max\{T \mid C_4(T)T \le \varepsilon^{-1/2}
		\ \text{and}\ C_5(T)T \le (4\varepsilon)^{-1}\}.
	\end{equation}
	For such $T$ the map $\mathscr{F}$ is a contraction on $X_\varepsilon$, and
	Banach's fixed point theorem gives a unique solution $\mathscr{U}\in X_\varepsilon$
	to \eqref{nonlinear hyperbolic equation with source term and zero initial condition}. Sobolev embedding provides the H\"older regularity.

		\medskip
		\textbf{Step 4: Uniform lower bounds for \(T\) of problem \eqref{nonlinear hyperbolic equation with source term and zero initial condition}.}
		Define	
		\begin{equation}\label{5.17}
			T_{\mathrm{lower}}:=\max\{T|	C_4(T) T \leqslant 1\ \text{and}\ C_5(T) T \leqslant \frac{1}{4}\}.
		\end{equation}
		Since \(\varepsilon < 1\), we have \(\frac{1}{\sqrt{\varepsilon}}>1\) and \(\frac{1}{4 \varepsilon} > \frac{1}{4}\). As \(C_4(T)T\) and \(C_5(T)T\) are continuous and increasing, it follows that \( T_{\mathrm{lifespan}}\geqslant T_{\mathrm{lower}} \), with \(T_{\mathrm{lower}}\) independent of \(\varepsilon\) which implies that properties 3 holds.
		
		The proof is complete.
	\end{proof}
	
	\begin{remark}\label{remark 5.3}
		The argument in the proof of Theorem~\ref{thm:5.2} shares the same spirit as that of Theorem 3 in Section 12.2 of \cite{Evans}. To ensure that the mapping $\mathscr{F}$ preserves the space $X_{\varepsilon}$ and satisfies the contraction property, the time $T$ is chosen to satisfy conditions \eqref{eq:5.13} and \eqref{eq C_T(5)}. Notably, as $\varepsilon$ decreases, the existence time $T$ for problem \eqref{1.2} increases, thereby generalizing the conclusion of Theorem 3 in Section 12.2 of \cite{Evans} in terms of the existence time $T$. In our setting, $T$ can be chosen arbitrarily large, whereas Theorem 3 in Section 12.2 of \cite{Evans} generally guarantees only a short-time existence. This enhancement stems from three key structural features: (i) the problem \eqref{nonlinear hyperbolic equation with source term and zero initial condition} has trivial initial data; (ii) the polynomial nonlinearity $\hat{N}$ has compact support in $D$ and an explicit form; and (iii) the source term $F_2$ has an explicit expression, with its $H^2(D)$-norm being small in terms of $\varepsilon$. Combined with the careful construction of the closed set $X_\varepsilon$ and the properties of the function $u_0$, these features enable the existence of a solution to problem \eqref{nonlinear hyperbolic equation with source term and zero initial condition} over an arbitrarily large time interval $[0, T]$.	\end{remark}
	
	We now proceed to prove Theorem \ref{thm:main2}. As the argument closely parallels that of Theorem \ref{1.1}, we provide the complete details below for clarity and self-containedness.
	\begin{proof}[Proof of Theorem \ref{thm:main2}.]\label{proof 1.2}
		In the three-dimensional case, according to Remark \ref{remark 3.8}, we select the predetermined parameters $r_0$, $\varepsilon$, and $m$ as follows:
		\begin{equation}\label{eeq 5.20}
			\begin{aligned}
				&\hfill r_0 = \min\left\{\frac{1}{3}(\mathcal{M} + 1)^{-1},\ \frac{1}{6}\min_{1 \leq i < j \leq n} |\mathbf{x}_i - \mathbf{x}_j|\right\}, \\
				&\hfill \varepsilon = \min\left\{\frac{1}{C_1 + (C_6)^2},\frac{1}{\left(C_4(T)T\right)^2},\frac{1}{4C_5(T)T},1\right\}, \\
				&\hfill m = \max\left\{ \left\lfloor \frac{256(2+C_2(r_0)\varepsilon)^2r_{0}^{3}-3}{2} \right\rfloor + 1,  M(r_0) \right\},
			\end{aligned}
		\end{equation}
		where the constants $C_1$, $C_2(r_0)$, and $M(r_0,\omega)$ are specified in Theorem~\ref{thm:main_result};  Constants $C_4(T)$ and $C_5(T)$ are given by \eqref{key estimate} and \eqref{eq 5.19}, respectively, in Theorem~\ref{thm:5.2};  Constant $C_6$ is given in Theorem~\ref{thm:5.2}; and the prescribed time $T$ is as given in Theorem~\ref{thm:main2}. Then, we choose the initial inputs for problem \eqref{1.2} as specified in \eqref{initial condition}, with the corresponding $u_0$ determined by the parameters $r_0$, $\varepsilon$, and $m$ in \eqref{eeq 5.20}. Let $\mathscr{U}$ denote the unique solution to auxiliary problem \eqref{nonlinear hyperbolic equation with source term and zero initial condition}. A direct computation verifies that
		\begin{equation}\label{eq 5.21}
			U := u_0 + \mathscr{U}
		\end{equation}
		constitutes the unique solution to problem \eqref{1.2}. Since $u_0$ is smooth and defined on $[0,\infty)$, the lifespan of $U$ is entirely determined by that of $\mathscr{U}$. As established by the lifespan of $\mathscr{U}$ in \eqref{eq:T_lifespan} and the explicit expression for $\varepsilon$ in \eqref{eeq 5.20}, $U$ exists on the interval $[0,T]$, where $T$ denotes the prespecified time  given in Theorem \ref{thm:main2}. Moreover, Theorems \ref{thm:5.2} and \ref{thm:main_result} together with Remark \ref{remark 2.3} yield $U \in L^{\infty}\big(0,T;H^{3}_{\text{loc}}(\mathbb{R}^d)\big)$.
		
		Next, applying the mean value theorem to \eqref{u0 smalles} and \eqref{3 d} in Theorem \ref{3.1} yields points $\mathbf{z}_i \in B_{3r_0}(\mathbf{x}_i) \backslash \overline{D}$ ($i=1,2,\dots,n$) such that for all $t \in [0,T]$:
		\begin{equation}\label{5.23}
			|\nabla u_0(\mathbf{z}_i,t)| \geqslant \frac{
				\tfrac{1}{16}\sqrt{2m+3} \, r_0^{-3/2} - \left(C_1 + C_2(r_0)\right)\varepsilon
			}{3r_0}.
		\end{equation}
		Combining \eqref{eeq 5.20}, \eqref{eq 5.21}, \eqref{5.23}, and properties (3) in Theorem \ref{thm:5.2}, we obtain
		\begin{equation}\label{gradient estiamte for U near zi}
			\begin{aligned}
				|\nabla U(\mathbf{z}_i,t)| &\geqslant \frac{\left(
					\tfrac{1}{16}\sqrt{2m+3} \, r_0^{-3/2} -C_2(r_0)\varepsilon\right)-  C_1\varepsilon
				}{3r_0} - C_6\sqrt{\varepsilon} \\
				& > \frac{2 - 1}{3r_0} - 1 = \frac{1}{3r_0} - 1 > \mathcal{M},
			\end{aligned}
		\end{equation}
		for a.e. $t \in [0,T]$. Here, we choose $m \geqslant \left\lfloor \frac{256(2 + C_2(r_0)\varepsilon)^2 r_{0}^{3} - 3}{2} \right\rfloor +1$ in \eqref{eeq 5.20} to guarantee that  $	\tfrac{1}{16}\sqrt{2m+3} \, r_0^{-3/2} -C_2(r_0)\varepsilon>2 $. On the other hand, using above estimate and properties (3) in Theorem \ref{thm:main_result}, we calculate  
		\begin{equation}\label{quotient of gradient U}
		\frac{|\nabla U(\mathbf{z}_i,t)|}{||U(\cdot,t)||_{C^{1,\frac{1}{2}}(\overline{D})}}\geqslant \frac{\mathcal{M}}{||u_0(\cdot,t)||_{C^{1,\frac{1}{2}}(\overline{D})}+||\mathscr{U}(\cdot,t)||_{C^{1,\frac{1}{2}}(\overline{D})}} \geqslant \frac{\mathcal{M}}{C_1\varepsilon+C_6\sqrt{\varepsilon}} > \frac{\mathcal{M}}{2}
		\end{equation}
		for a.e. $t\in [0,T]$. 	By \eqref{gradient estiamte for U near zi} and \eqref{quotient of gradient U}, properties (1) and (2) in Theorem \ref{thm:main2} are verified. 
		
		Furthermore, under the above construction, the localized gradient amplification points are confined within the balls $B_{3r_0}(\mathbf{x}_i)$ which implies that 
		\begin{equation*}
			\operatorname{meas}\left\{ \mathbf{x} \in \bigcup_{i=1}^{n} \left(B_{3r_0}(\mathbf{x}_i)\backslash \overline{D}\right) : |\nabla U(\mathbf{x},t)| > \mathcal{M} \right\} < 36n\pi r_0^3 < \frac{100n}{(\mathcal{M} + 1)^3}  \to 0
		\end{equation*}
		as $\mathcal{M} \to \infty$, for almost every $t \in [0,T]$. Properties (3) in Theorem \ref{thm:main2} is verified. The two-dimensional case follows analogously.
		
		The proof is complete.
	\end{proof}

	\section*{Acknowledgment}
	The work of H. Diao is is supported by National Natural Science Foundation of China  (No. 12371422), the Fundamental Research Funds for the Central Universities, JLU.   The work of H. Liu is supported by the Hong Kong RGC General Research Funds (projects 11304224, 11311122 and 11303125),  the NSFC/RGC Joint Research Fund (project N\_CityU101/21), the France-Hong Kong ANR/RGC Joint Research Grant, A-CityU203/19.

	\medskip
	

\end{document}